\newcolumntype{L}[1]{>{\raggedright\let\newline\\\arraybackslash\hspace{0pt}}m{#1}}
\newcolumntype{C}[1]{>{\centering\let\newline\\\arraybackslash\hspace{0pt}}m{#1}}
\newcolumntype{R}[1]{>{\raggedleft\let\newline\\\arraybackslash\hspace{0pt}}m{#1}}
\definecolor{blue}{rgb}{0.0, 0.0,1.0}
\definecolor{gray}{rgb}{0.7, 0.7, 0.7}
\definecolor{blue-violet}{rgb}{0.54, 0.17, 0.89}
\theoremstyle{plain}
\newtheorem{theorem}{Theorem}[section]
\newtheorem{lemma}[theorem]{Lemma}
\newtheorem{corollary}[theorem]{Corollary}
\theoremstyle{definition}
\newtheorem{definition}[theorem]{Definition}
\newtheorem*{rem*}{Remark}
\newtheorem*{warning*}{Warning}
\newtheorem{rem}[theorem]{Remark}
\DeclareMathOperator{\diag}{diag}
\newsavebox\CBox
\newcommand{\subalign}[2][c]{%
	\if#1c\vcenter\else\vtop\fi{%
		\Let@ \restore@math@cr \default@tag
		\baselineskip\fontdimen10 \scriptfont\tw@
		\advance\baselineskip\fontdimen12 \scriptfont\tw@
		\lineskip\thr@@\fontdimen8 \scriptfont\thr@@
		\lineskiplimit\lineskip
		\ialign{\hfil$\m@th\scriptstyle##$&$\m@th\scriptstyle{}##$\hfil\crcr
			#2\crcr
		}%
	}%
}
\renewcommand*{\@fnsymbol}[1]{\ensuremath{\ifcase#1\or *\or \ddagger\or \mathsection\or \vee\or \wedge\or \dagger\or
		\mathsection\or \mathparagraph\or \|\or **\or \dagger\dagger
		\or \ddagger\ddagger \else\@ctrerr\fi}}
\numberwithin{equation}{section}
\begin{document}
\title{Solving a linear program via a single unconstrained minimization}

\author{
        Adilet Otemissov \thanks{Department of Mathematics, School of Sciences and Humanities, Nazarbayev University, Kabanbay Batyr 53, Astana 010000, Kazakhstan; \texttt{aotemissov, alina.abdikarimova@nu.edu.kz}. This research has been funded by Nazarbayev University under the Faculty Development Competitive Research Grant Program for the 2023--2025 Grant \textnumero 20122022FD4138. }
        \and
	Alina Abdikarimova \footnotemark[1]
}

\date{\today}
\maketitle
\footnotesep=0.4cm

{\small
	\begin{abstract}
        This paper proposes a novel approach for solving linear programs. We reformulate a primal-dual linear program as an unconstrained minimization of a convex and twice continuously differentiable merit function. When the optimal set of the primal-dual pair is nonempty, its optimal set is equal to the optimal set of the proposed merit function. Minimizing this merit function poses some challenges due to its Hessian being singular at some points in the domain, including the optimal solutions. We handle singular Hessians using the Newton method with Levenberg-Marquardt regularization. We show that the Newton method with Levenberg-Marquardt regularization yields global convergence to a solution of the primal-dual linear program in at most $O(\epsilon^{-3/2})$ iterations requiring only the assumption that the optimal set of the primal-dual linear program is bounded. Testing on random synthetic problems demonstrates convergence to optimal solutions to very high accuracy significantly faster than the derived worst-case bound. We further introduce a modified merit function that depends on a scalar parameter $\nu > 0$, whose Hessian is nonsingular for all $\nu > 0$ and which reduces exactly to the original merit function when $\nu = 0$. Based on this formulation, we propose a heuristic scheme that performs Newton steps while gradually decreasing $\nu$ toward zero. Numerical experiments indicate that this approach achieves faster convergence, particularly on higher-dimensional problems.

	\end{abstract}
	
	\bigskip
	
	\begin{center}
		\textbf{Keywords:}
		linear programming, penalty methods, regularized Newton method,       global convergence, unconstrained minimization, zero residual problem
	\end{center}
}

\maketitle

\section{Introduction} \label{sec:intro}
In this paper, we intend to solve primal and dual linear programs in standard form:
\vspace{\baselineskip}

	\noindent\begin{minipage}{.5\linewidth}
		\begin{equation}\label{eq:primal}
			\tag{P}
			\begin{alignedat}{4}
				\text{min} \;\; & & c^Tx & \\
				\text{subject to} \;\;&     &   Ax & = b \\
				& & x& \geq 0
			\end{alignedat}
		\end{equation}
	\end{minipage}%
	\begin{minipage}{.5\linewidth}
		\begin{equation}\label{eq:dual}
			\tag{D}
			\begin{alignedat}{4}
				\text{max} \;\; & & b^T\lambda \;\;\;\;\;\;& \\
				\text{subject to} \;\;&     &   A^T\lambda + s & = c \\
				& & s& \geq 0,
			\end{alignedat}
		\end{equation}
	\end{minipage}
\vspace{\baselineskip}

\noindent where $x, s \in \mathbb{R}^n$ and $\lambda \in \mathbb{R}^m$ are the variables and where $c \in \mathbb{R}^n$, $b \in \mathbb{R}^m$ and $A \in \mathbb{R}^{m \times n}$ ($m<n$) are given. 

Linear programming problems are widely encountered in applications such as engineering, finance, transport and many more \cite{Hillier2014}. Linear programs are also commonly used as subproblems to obtain solutions to more complex problems \cite{Wolsey2020}. In many of these cases, there is a demand to obtain accurate solutions to linear programs in a fast and reliable way. Meeting this demand becomes increasingly challenging as problem sizes in practical applications grow bigger. These challenges are motivating researchers to modify the current methods or seek alternative ways to solve linear programs (see, e.g., \cite{Chowdhury2022, Vu2018, Wu2023}).

\paragraph{Historical overview.} Linear Programming is one of the success stories in optimization. Its story began in 1950's with the advent of simplex method discovered by  Dantzig~\cite{Dantzig1951_1,Dantzig1951_2}. It has been developed and modified since and, nowadays, it is one of the state-of-the-art methods for solving linear programs \cite{Nesterov2006,Vanderbei2020}. Simplex method performs very well in practice, however, it has not been proven yet that simplex method can solve every linear program in polynomial time. For each version of simplex, one can find instances where simplex method will spend exponential number of iterations before reaching an optimal solution \cite{Klee1972, Vanderbei2020}. Later in 1979, Khachiyan~\cite{Khachiyan1979} proposed a new method for solving linear programs called ellipsoid method. It was proven that ellipsoid method can solve linear programs in polynomial time, but it turned out to be slow in practice. Polynomial time convergence of ellipsoid method was a significant theoretical breakthrough as it indicated that there could be an algorithm that can provably solve linear programs in polynomial time and perform well in practice. Such algorithm came to light in 1984 in the famous paper by Karmarkar~\cite{Karmarkar1984} who proposed a new polynomial time algorithm that is now classified as an Interior Point Method (IPM) (see also \cite{Nesterov1994}). The discovery of this method revitalized research in linear programming resulting in fast development of IPMs. Nowadays, IPMs are one of the state-of-the-art methods for solving linear programs that perform very well in practice matching performance of simplex method.

\subsection{Reformulations of linear programs}
\paragraph{Barrier reformulation.} A starting point in the derivation of an IPM is the barrier formulation of \eqref{eq:primal}:
\begin{equation}\label{eq:barrier_problem}
			\begin{alignedat}{4}
				\text{min} \;\; & c^Tx - \nu \sum_{j = 1}^n \log(x_j)   \\
				\text{subject to} \;\;&  Ax = b, 
			\end{alignedat}
		\end{equation}
whose Lagrangian function is 
$$L(x,\lambda) = c^Tx - \nu \sum_{j = 1}^n \log(x_j) + \lambda^T (b-Ax). $$
Taking the derivatives with respect to $x_j$'s and $\lambda_i$'s yield the first-order optimality conditions for $L(x,\lambda)$:
\begin{equation} \label{eq:comp_slackness}
        \begin{aligned}
            x_js_j & = \nu \;\; \text{for $j= 1, \dots, n$,}  \\
            Ax - b & = 0, \\
            A^T\lambda + s - c& = 0.\\
            \end{aligned}
\end{equation} 
If we add nonnegativity constraints to \eqref{eq:comp_slackness} and set $\nu = 0$ we will end up with optimality conditions for \eqref{eq:primal} and \eqref{eq:dual}. However, adding nonnegativity constraints to \eqref{eq:comp_slackness} will complicate the nonlinear system. Instead, one can drive the iterates towards satisfaction of the optimality conditions by solving the nonlinear system \eqref{eq:comp_slackness} using Newton method with $\nu$ approaching zero, while keeping $x$ and $s$ positive. The solution to \eqref{eq:comp_slackness} for different values of $\nu$ creates a central path $\{ (x_{\nu}, \lambda_{\nu}, s_{\nu}): \nu>0\}$ which leads us to an optimal solution.  Solution of \eqref{eq:comp_slackness} with Newton method for different values of $\nu$ generates a sequence of iterates that are not exactly on the path but are close to it. As $\nu $ approaches zero, barrier problem \eqref{eq:barrier_problem} better approximates original problem \eqref{eq:primal} and the iterates approach the boundary of the feasible region but never cross it because of the logarithmic term. 

It should be noted that popular versions of IPMs allow iterates to be infeasible with respect to primal and dual equality constraints; they require only that the iterates $x^k$ and $s^k$ remain strictly positive. 

\paragraph{Penalty reformulation.} In the barrier formulation, the nonnegativity constraints are handled through logarithmic terms, that create a `barrier' that the iterates are never allowed to cross. In a penalty formulation the iterates are allowed to cross into the infeasible region, but are incurred a large cost if they do so. The simplest and common penalty formulation for
\begin{equation}\label{eq:equal_const_problem}
    \min \, f(x) \text{ subject to } g_i(x) = 0 \text{ for $i = 1, \dots, m$}
\end{equation}
is a quadratic penalty formulation
\begin{equation}\label{eq:penalty_formulation}
    \min \; h_{\nu}(x) := f(x) + \frac{1}{2\nu} \sum_{i=1}^m g_i^2(x),
\end{equation}
where $\nu > 0$ is the penalty parameter. Driving $\nu$ towards zero, we increase the penalty for violating the constraints. Given a sequence of penalty parameters $\nu_k$ that gradually approach zero, we can solve \eqref{eq:penalty_formulation} for each value of $\nu^k$ and feed the computed solution as a starting point to minimize $h_{\nu^{k+1}}(x)$. Nocedal and Wright showed in  
\cite[Theorem 17.1]{Nocedal2006} that by doing so we generate a sequence of solutions $x^k$ that approach an optimal solution $x^*$ of the original problem as $\nu^k \rightarrow 0$. However, as $\nu^k$ becomes smaller, problem \eqref{eq:penalty_formulation} becomes increasingly ill-conditioned \cite{BenTal2023, Nocedal2006}. This issue can be alleviated with the use of an augmented Lagrangian formulation instead of \eqref{eq:penalty_formulation}, which can force satisfaction of the constraints at a non-zero value of the penalty parameter. Augmented Lagrangian formulation for linear programs was used, for example, in `Idiot' crash \cite{Galabova2020, Forrest2024} to obtain approximate solutions to linear programs (see also \cite{Evtushenko2005, Guler1992, Mangasarian2004}). This solution can then be used as a good starting point for simplex method to speed up convergence. One of the main drawbacks of a typical quadratic penalty method is its inability to produce an optimal solution to the original problem in a single minimization; it usually requires solving several optimization problems. 

If, in addition to equality constraints, we have nonnegativity constraints $x \geq 0$, we can use the following formulation 
\begin{equation}\label{eq:ineq_constr_problem}
    \min \; h'_{\nu}(x) := f(x) + \frac{1}{2\nu} \sum_{i=1}^m g_i^2(x) + \frac{1}{2\nu} \sum_{j=1}^n \max\{ -x_j,0 \}^2.
\end{equation}
When $x_j$ is negative, then the penalty term $\max\{ -x_j,0 \}$ becomes positive, discouraging $x$ from being optimal at a solution with negative entries. The penalty term $\max\{ -x_j,0 \}^2$ is continuously differentiable, but is not twice continuously differentiable prohibiting the use of classical second-order methods. 

Recall that a penalty function is called exact if there exists a nonzero value of $\nu$ such that a single minimization of the penalty function recovers an exact solution of the original problem. It is well known that, in general, if penalty terms in the penalty function are continuously differentiable as in \eqref{eq:ineq_constr_problem}, then the penalty function is not exact (see \cite{Dolgopolik2016, Nocedal2006}). In general, a nondifferentiability is a necessary requirement for the penalty function to be exact. 
\subsection{Our contributions}

Are nondifferentiabilities unavoidable if we want to solve a problem with inequality constraints in a single minimization of a penalty function? In this regard,  Bertsekas~\cite[p.~1]{Bertsekas1975} writes ``Except for trivial cases, nondifferentiabilities are a necessary evil if the penalty method is to yield an optimal solution in a single minimization.'' However, this holds true if the penalty function has the commonly used form: `objective function $+$ penalty parameter $\times$ constraint violation' (see \cite[Eq.~(5)]{Bertsekas1975}). Our paper demonstrates that we can avoid first- and second-order nondifferentiabilities in an exact reformulation of a linear program if we step away from the convention of including the objective function as a separate term in the reformulation. In \Cref{sec:future_work}, we show that this holds true for a more complex constrained optimization problem.

We propose a convex and twice continuously differentiable merit function $f_q(x,\lambda,s)$ defined in \eqref{eq:merit_function} in \Cref{sec:penalty_formulation} that combines the ideas of barrier and penalty methods. Our formulation relies on optimality conditions \eqref{eq:comp_slackness}, but with complementary slackness $x_js_j$ ($1 \leq j \leq n$) replaced by the duality gap $c^Tx- b^T\lambda$. Nonnegativity constraints are imposed through an addition of max functions, just like in \eqref{eq:ineq_constr_problem}, but raised to a power greater than 2 to make the max penalty terms twice continuously differentiable. For example, for $x_j$, we have $\max\{-x_j,0\}^q$ in the merit function, where $q>2$.

\paragraph{Strengths.} Our formulation \eqref{eq:merit_function} has the following strengths:
\begin{enumerate}
    \item \eqref{eq:merit_function} is an unconstrained optimization problem.
    \item Merit function $f_q$ in \eqref{eq:merit_function} is convex and twice continuously differentiable. 
    \item When primal-dual pair \eqref{eq:primal} and \eqref{eq:dual} have an optimal solution, the optimal set of $f_q$ is equal to the optimal set of \eqref{eq:primal} and \eqref{eq:dual}. 
    \item When primal-dual pair \eqref{eq:primal} and \eqref{eq:dual} have an optimal solution, the minimum of $f_q$ is equal to zero, in other words, \eqref{eq:merit_function} is a zero residual problem. 
    \item Based on the computed value of $\min f_q$ we can conclude whether \eqref{eq:primal} and \eqref{eq:dual} have an optimal optimal or unbounded/infeasible. 
    \item \eqref{eq:merit_function} is free of a penalty parameter that needs to be increased or decreased, as usually done in a typical penalty method.
    \item Merit function $f_q$ is defined over the whole Euclidean space and so it allows initialization of a starting point at any point in $\mathbb{R}^{2n+m}$ including those that are infeasible for \eqref{eq:primal} and \eqref{eq:dual}.
    \item For $q = 3$, the Hessian of $f_q$ is Lipschitz continuous with Lipschitz constant~1. 
\end{enumerate}

\paragraph{Weaknesses.} Our formulation \eqref{eq:merit_function} has the following weaknesses:
\begin{enumerate}
    \item The Hessian of $f_q$ is singular at some points in the domain and when \eqref{eq:primal} and \eqref{eq:dual} have an optimal solution, the Hessian is singular at any optimal solution of the merit function.
    \item The Hessian has size $(2n+m) \times (2n+m)$, in contrast to the smaller ($m \times m$) systems typically solved in IPMs. 
    \item Although the computed value of $\min f_q$ can tell us whether \eqref{eq:primal} and \eqref{eq:dual} do not have an optimal solution, i.e., whether they are unbounded/infeasible, it cannot distinguish between unbounded and infeasible cases. 
    
\end{enumerate}
\paragraph{Addressing the first weakness.} Since $f_q$ is convex and twice continuously differentiable, we can use second-order methods to minimize $f_q$. However, these methods need to be applied with special care since the Hessian of $f_q$ is singular at the optimal solution and at some other points in the domain. In particular, we use Newton method with Levenberg-Marquardt regularization \cite{Levenberg1944, Marquardt1963}. For any optimal solution $(x^*,\lambda^*,s^*)$ of primal-dual pair \eqref{eq:primal} and \eqref{eq:dual}, we show that Newton method with Levenberg-Marquardt regularization achieves $|f_q(x^k, \lambda^k, s^k) - f_q(x^*,\lambda^*,s^*)| < \epsilon$ globally it at most $O(\epsilon^{-3/2})$ iterations\footnote{In one iteration we solve one linear system to find a search direction.} provided that the optimal set of \eqref{eq:primal} and \eqref{eq:dual} is bounded. 

In practice, Newton method with Levenberg-Marquardt regularization may converge slowly. To improve convergence, we propose a heuristic approach (see \Cref{alg:Newton_for_HMF}) that minimizes a modified merit function $h_{q, \nu}$ defined in \eqref{eq:homotopy_merit_function}, which involves an additional parameter $\nu \geq 0$. This new merit function can be thought of as an approximation to $f_q$. The approximation is better for smaller $\nu$ and, at $\nu = 0$, $h_{q, \nu}$ is identically equal to $f_q$. The modified merit function has an advantage of having a nonsingular Hessian for $\nu > 0$. At each iteration we solve a nonsingular Newton system to find a suitable step that minimizes $h_{q,\nu^{k}}$ and then we set $\nu^{k+1} = \theta \nu^k$ where $\theta$ is strictly less than 1 to drive $\nu^k$'s to zero.  

We test the two approaches (see \Cref{alg:Newton_with_LM} and \Cref{alg:Newton_for_HMF}) on synthetically generated random problems of varying sizes. Both approaches converge to optimal solutions of all three test problems with the heuristic approach (\Cref{alg:Newton_for_HMF}) performing better than Newton method with Levenberg-Marquardt regularization (\Cref{alg:Newton_with_LM}) for a larger dimensional problem.

\paragraph{Addressing the second weakness.} Much of computational effort at each iteration of \Cref{alg:Newton_with_LM} and \ref{alg:Newton_for_HMF} is taken up in solving a linear system to find a Newton step. Typically, solution of a linear system involves Cholesky factorization of the Hessian. Since the Hessian has size $(2n+m) \times (2n+m)$, a naive application of Cholesky factorization requires $O((2n+m)^3)$ floating operations. In \Cref{sec:linear_system}, we show that we can solve the linear system by performing two separate Cholesky factorizations of smaller $m\times m$ and $n \times n$ systems, thus reducing computational burden from $O((2n+m)^3)$ to $O(m^3+n^3)$.

\paragraph{Addressing the third weakness.} If after computing the minimum of the merit function $f_q(x,\lambda, s)$ we have $\min f_q(x,\lambda, s) > 0$ (see \Cref{cor:no_opt_sol}) we conclude that \eqref{eq:primal} and \eqref{eq:dual} have no optimal solution. However, we cannot, with certainty, say which of the problems \eqref{eq:primal} and \eqref{eq:dual} is unbounded and which is infeasible, or whether they are both infeasible. To reliably distinguish between the two cases, we suggest using the simplified homogeneous formulation \eqref{eq:hlf} presented in \cite{Xu1996} (see also \cite{Ye1994}). Based on the solution of the homogeneous formulation, we can reliably conclude whether the primal \eqref{eq:primal}, the dual \eqref{eq:dual}, or both are infeasible.  

\subsection{Outline} In \Cref{sec:penalty_formulation}, we define the merit function and describe its properties in relation to primal-dual pair \eqref{eq:primal} and \eqref{eq:dual}. We derive the Hessian of the merit function and show that it has two notable properties: i) it is Lipschitz continuous, but ii) singular at some points in the domain of the merit function.
\Cref{sec:reg_Newton} discusses regularized Newton methods focusing on Levenberg-Marquardt regularization. Using existing global convergence results (see \cite{Mishchenko2023}), we prove that Newton method with Levenberg-Marquardt regularization provides a globally convergent scheme for finding an optimal solution of \eqref{eq:primal} and \eqref{eq:dual} with $O(\epsilon^{-3/2})$ global convergence rate. At the end of \Cref{sec:reg_Newton}, we show that we can solve a linear system, needed to determine a Newton search direction, more efficiently avoiding a full Cholesky factorization of the Hessian. \Cref{sec:numerics} begins with the introduction of a new merit function --- a modification of the old one --- but that has better properties, for example, its Hessians are nonsingular. 
With this new merit function, we propose a heuristic approach that numerically performs better. In \Cref{sec:numerics}, we test both merit functions on synthetically generated random linear programs that have optimal solutions. We also test both formulations on a random unbounded linear program. \Cref{sec:conclusion} concludes our work and discusses future work.

\subsection{Notation and definitions} We use 
\begin{equation}\label{eq:relu_def}
    (-x)_+^q := \begin{pmatrix}
            \max\{-x_1,0\}^q & \dots & \max\{-x_n,0\}^q
        \end{pmatrix}^T
\end{equation}
where 
$$ \max\{-x_j,0\}^q = \begin{cases}
    (-x_j)^q & \text{if $x_j<0$,} \\
    0 & \text{if $x_j\geq 0$.}
\end{cases}$$
We write $\diag(x)$ to denote a diagonal matrix with entries of $x$ on the diagonal. We use $0_n$ to denote a zero vector of size $n$ where the specification is needed. We use $\|\cdot\|_2$ and $\|\cdot\|_F$ for the Euclidean norm and the Frobenius norm, respectively.

\begin{definition} \label{def:Lipschitz}
    For a twice continuously differentiable function $f: \mathbb{R}^n \rightarrow \mathbb{R}$, we say that the Hessian $\nabla^2 f(x)$ is $L$-Lipschitz if and only if 
    \begin{equation}\label{eq:Lipschitz}
    \text{$\|\nabla^2 f(x) - \nabla^2 f(y)\|_2 \leq L \| x-y \|_2$ for all $x, y \in \mathbb{R}^n$.}
    \end{equation}
\end{definition}
\begin{definition}\label{def:bounded_set}
    Let $X$ be a set of points in $\mathbb{R}^n$. We say that $X$ is bounded if and only if there exists a $D>0$ such that, for every point $x$ in $X$, $\|x\|_2 \leq D$. 
\end{definition}


\section{The merit function}\label{sec:penalty_formulation}
Recall that $(x,\lambda,s)$ is an optimal solution to the primal-dual pair \eqref{eq:primal} and \eqref{eq:dual} if and only if $(x,\lambda,s)$ satisfy
    \begin{equation} \label{eq:optimality_conditions}\tag{OC}
        \begin{aligned}
            c^Tx - b^T\lambda & = 0  \\
            Ax - b & = 0 \\
            A^T\lambda + s - c& = 0\\
            x, s & \geq 0.
        \end{aligned}
    \end{equation} 
We propose to find a solution to \eqref{eq:optimality_conditions} by finding a minimizer to the following merit function
\begin{equation} \label{eq:merit_function}\tag{MF}
\begin{aligned}
    \mathop{\arg\min}_{(x,\lambda,s) \in \mathbb{R}^{2n+m}}\, f_q(x,\lambda,s) := 
    & \; \frac{1}{2}(c^Tx - b^T\lambda)^2 + \frac{1}{2}\|Ax - b\|_2^2 + \frac{1}{2}\|A^T\lambda + s - c\|_2^2 \\
    & + \frac{1}{q(q-1)}\sum_{j=1}^n \left( \max\{-x_j,0\}^q + \max\{-s_j,0\}^q \right),
\end{aligned}
\end{equation}
where $q > 2$ is fixed. Note that the merit function $f$ is convex and is twice continuously differentiable for $q>2$ (\cite[p.~59]{Chong2001})\footnote{Note that the requirement $q>2$ is needed for $\max\{-x_j,0\}^q + \max\{-s_j,0\}^q$ to be twice continuously differentiable.}. 

Suppose that an optimal solution to \eqref{eq:primal} and \eqref{eq:dual} exists and let $(x^*,\lambda^*,s^*)$ be any minimizer of \eqref{eq:primal} and \eqref{eq:dual}. It is clear that optimality conditions \eqref{eq:optimality_conditions} are satisfied for $(x^*,\lambda^*,s^*)$ if and only if $f_q(x^*,\lambda^*,s^*) = 0$. Given that $f_q(x,\lambda,s)$ is nonnegative for all $(x,\lambda,s) \in \mathbb{R}^{2n+m}$, this also implies that $(x^*,\lambda^*,s^*)$ is an optimal solution to $\eqref{eq:merit_function}$. In other words, the optimal set of primal-dual pair \eqref{eq:primal} and \eqref{eq:dual} is equal to the optimal set of \eqref{eq:merit_function} when \eqref{eq:primal} and \eqref{eq:dual} have an optimal solution. We formally state these simple observations in the following theorem. 
\begin{theorem}\label{thm:equivalence}
    A solution $(x^*,\lambda^*,s^*)$ is optimal for primal-dual pair \eqref{eq:primal} and \eqref{eq:dual} if and only if $(x^*,\lambda^*,s^*)$ is an optimal solution to \eqref{eq:merit_function} and $f_q(x^*,\lambda^*,s^*) = 0$.
\end{theorem}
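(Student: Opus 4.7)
The plan is to prove both implications by leveraging the fact that $f_q$ is a sum of manifestly nonnegative terms, each of which corresponds directly to one of the conditions in \eqref{eq:optimality_conditions}. In particular, the minimum value of $f_q$ over $\mathbb{R}^{2n+m}$ is bounded below by $0$, so any point at which $f_q$ vanishes is automatically a global minimizer of \eqref{eq:merit_function}. This reduces the theorem to an elementary equivalence between satisfaction of \eqref{eq:optimality_conditions} and vanishing of $f_q$.

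For the forward direction, I would start from the assumption that $(x^*,\lambda^*,s^*)$ is optimal for the primal-dual pair \eqref{eq:primal}--\eqref{eq:dual}, and invoke the characterization of optimality recalled just before \eqref{eq:merit_function}: the triple satisfies $c^Tx^* - b^T\lambda^* = 0$, $Ax^* = b$, $A^T\lambda^* + s^* = c$, and $x^*,s^* \geq 0$. Substituting into \eqref{eq:merit_function} term by term, the first three squared terms vanish, and since $x_j^*, s_j^* \geq 0$ for every $j$ the quantities $\max\{-x_j^*,0\}$ and $\max\{-s_j^*,0\}$ are all zero. Hence $f_q(x^*,\lambda^*,s^*) = 0$. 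Because $f_q \geq 0$ on $\mathbb{R}^{2n+m}$, this value is the global minimum, so $(x^*,\lambda^*,s^*)$ is optimal for \eqref{eq:merit_function}.

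For the reverse direction, I assume $(x^*,\lambda^*,s^*)$ is optimal for \eqref{eq:merit_function} with $f_q(x^*,\lambda^*,s^*) = 0$. Each of the five summands in the definition of $f_q$ is nonnegative, so each must individually vanish. The first summand forces $c^Tx^* = b^T\lambda^*$; the second forces $Ax^* = b$; the third forces $A^T\lambda^* + s^* = c$; and the last two sums of $q$-th powers of the positive parts force $\max\{-x_j^*,0\} = 0$ and $\max\{-s_j^*,0\} = 0$ for all $j$, i.e., $x^*, s^* \geq 0$. This reconstructs all of \eqref{eq:optimality_conditions}, so by the optimality characterization $(x^*,\lambda^*,s^*)$ is optimal for the primal-dual pair.

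I do not anticipate a genuine obstacle here: the statement is essentially a bookkeeping result, and the only subtlety worth flagging explicitly is that the $q$-th power (with $q > 2$) does not change the sign structure, so $\max\{-x_j^*,0\}^q = 0$ if and only if $x_j^* \geq 0$. The proof therefore comprises two short paragraphs, one per implication, together with the observation that $f_q \geq 0$ everywhere to justify the equivalence between ``minimizer with value zero'' and ``zero of $f_q$''.
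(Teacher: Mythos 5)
Your proposal is correct and follows essentially the same reasoning as the paper, which establishes the theorem via the observation (made in the paragraph preceding the statement) that \eqref{eq:optimality_conditions} holds at $(x^*,\lambda^*,s^*)$ if and only if $f_q(x^*,\lambda^*,s^*)=0$, combined with the nonnegativity of $f_q$ to identify zeros of $f_q$ with global minimizers. Your term-by-term bookkeeping of the five nonnegative summands is just a slightly more explicit rendering of the same argument.
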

\begin{rem}
    When \eqref{eq:primal} and \eqref{eq:dual} have an optimal solution, we can view \eqref{eq:merit_function} as a zero residual problem. 
\end{rem}

When \eqref{eq:primal} and \eqref{eq:dual} are unbounded and/or infeasible, there is no solution that satisfies \eqref{eq:optimality_conditions}. In this case, the merit function $f_q$ cannot have a finite optimum $(x^*,\lambda^*,s^*)$ at which it is equal to zero, since if there were, then this solution must satisfy \eqref{eq:optimality_conditions}. We state this result as a corollary of \Cref{thm:equivalence}. 
\begin{corollary}\label{cor:no_opt_sol}
    Primal-dual pair \eqref{eq:primal} and \eqref{eq:dual} are each either infeasible or unbounded if and only if $\min_{(x,\lambda,s) \in \mathbb{R}^{2n+m}} f_q(x,\lambda,s) > 0$, where $f_q(x,\lambda,s)$ is defined in \eqref{eq:merit_function}.
\end{corollary}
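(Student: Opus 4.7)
The plan is to derive this corollary directly from \Cref{thm:equivalence} together with strong duality for linear programs. The starting observation is that $f_q(x,\lambda,s)$ is a sum of nonnegative summands, so $f_q(x,\lambda,s)=0$ exactly when every summand vanishes; given $q>2$, this happens if and only if $(x,\lambda,s)$ satisfies the system \eqref{eq:optimality_conditions}. Thus \Cref{thm:equivalence} may be rephrased as: the zero level set of $f_q$ coincides with the set of optimal primal-dual solutions.

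For the ``$\Leftarrow$'' direction, if $\min f_q>0$ then $f_q$ is nowhere zero, so by \Cref{thm:equivalence} no optimal primal-dual pair exists, and strong LP duality immediately forces each of \eqref{eq:primal} and \eqref{eq:dual} to be either infeasible or unbounded.

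For the ``$\Rightarrow$'' direction, suppose each of \eqref{eq:primal} and \eqref{eq:dual} is infeasible or unbounded; then no optimal pair exists and \Cref{thm:equivalence} yields $f_q>0$ pointwise. The delicate step is ruling out the pathological case $\inf f_q=0$ without attainment. I would argue by contradiction: if $(x^k,\lambda^k,s^k)$ drives $f_q$ to zero, every summand vanishes, so writing $x^k=x^k_+-x^k_-$ with $x^k_-:=\max\{-x^k,0\}$ one obtains $x^k_-\to 0$, hence $x^k_+\geq 0$ with $Ax^k_+\to b$; an analogous statement holds for the nonnegative part $s^k_+$ of $s^k$ with $A^T\lambda^k+s^k_+\to c$. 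Weak duality rules out both \eqref{eq:primal} and \eqref{eq:dual} being simultaneously unbounded, so at least one of them must be infeasible. If \eqref{eq:primal} is infeasible, Farkas' lemma supplies $y$ with $A^Ty\leq 0$ and $b^Ty>0$, and then for every $x\geq 0$ we have $\|Ax-b\|_2\geq b^Ty/\|y\|_2>0$, contradicting $Ax^k_+\to b$. A symmetric Farkas certificate handles the case where \eqref{eq:dual} is infeasible.

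The main obstacle I anticipate is precisely this infimum-versus-minimum subtlety in the forward direction. If one reads ``$\min f_q>0$'' loosely as ``$f_q(x,\lambda,s)>0$ for every $(x,\lambda,s)$'', the corollary collapses to a one-line consequence of \Cref{thm:equivalence} and strong LP duality; this is consistent with the informal justification given in the paragraph immediately preceding the corollary, and a rigorous treatment of the attainment issue requires the Farkas argument sketched above.
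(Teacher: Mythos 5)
Your proof is correct, and its core reduction --- nonnegativity of $f_q$ plus \Cref{thm:equivalence} --- is exactly the paper's one-line argument. Where you genuinely depart from the paper is in the forward direction: the paper's proof silently identifies ``$\min f_q > 0$'' with ``$f_q$ has no zero,'' and never addresses whether the infimum could equal $0$ without being attained. You isolate this as the delicate point and close it with a Farkas certificate: weak duality forces at least one of \eqref{eq:primal}, \eqref{eq:dual} to be infeasible, and an infeasibility certificate $y$ with $A^Ty\le 0$, $b^Ty>0$ (resp.\ $z \ge 0$ with $Az=0$, $c^Tz<0$) bounds $\|Ax-b\|_2 \ge b^Ty/\|y\|_2$ uniformly over $x \ge 0$ (resp.\ bounds $\|A^T\lambda+s-c\|_2$ away from zero uniformly over $s\ge 0$), which together with $x^k_- \to 0$ and $s^k_- \to 0$ rules out any sequence driving $f_q$ to zero. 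This buys you the strictly stronger conclusion $\inf f_q > 0$, which is what the corollary actually asserts if ``$\min$'' is read literally; the paper's argument only yields the weaker statement that $f_q$ is nowhere zero. Your steps check out (the Cauchy--Schwarz bound, the passage from $Ax^k \to b$ and $x^k_-\to 0$ to $Ax^k_+ \to b$, and the weak-duality elimination of the doubly-unbounded case are all sound), so your version is a genuine tightening of the proof given in the paper.
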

\begin{proof} The result follows from the fact that $f_q(x,\lambda,s) \geq 0$ for all $(x,\lambda,s) \in \mathbb{R}^{m+2n}$ and \Cref{thm:equivalence}.
\end{proof}

\begin{rem}
    The merit function is defined over the entire Euclidean space allowing iterates to be infeasible with respect to primal and dual constraints.
\end{rem}

\subsection{Detecting infeasibility}
\Cref{cor:no_opt_sol} can help us identify the cases when the primal and dual problems are infeasible and/or unbounded. If, during solution of \eqref{eq:merit_function}, the gradient of $f_q$ approaches zero, but the merit function itself fails to do so, this might be an indication that \eqref{eq:primal} and \eqref{eq:dual} do not have an optimal solution. One of the pitfalls of using this criterion, however, is its inability to distinguish between unbounded and infeasible problems. To distinguish between the two cases, we suggest using the simplified homogeneous formulation of a linear program presented in \cite{Xu1996} (see also \cite{Ye1994}):
\begin{equation}\label{eq:hlf}
    \begin{aligned}
        c^Tx-b^T\lambda + \kappa & = 0, \\
        Ax - b\tau & = 0, \\
        A^T\lambda+s - c\tau & = 0, \\
        x, s, \tau, \kappa & \geq 0.
    \end{aligned}
\end{equation}
The simplified homogeneous formulation has been one of the reliable ways to detect infeasibility in IPMs \cite{Andersen2000, Wright1997}. Its solution can reveal whether the problem is primal infeasible, dual infeasible, or both. Simplified homogeneous formulation \eqref{eq:hlf} is satisfied for a trivial solution $(x, \lambda, s, \tau,  \kappa) = (0,0,0,0,0)$, but it can be shown that \eqref{eq:hlf} also has a strictly complementary solution $(\hat{x},  \hat{\lambda}, \hat{s}, \hat{\tau}, \hat{\kappa})$, which satisfies $\hat{\tau} \hat{\kappa} = 0$ with $\hat{\tau} + \hat{\kappa} > 0$ and
\begin{equation}
    \text{$\hat{x}_j \hat{s}_j = 0$ and $\hat{x}_j + \hat{s}_j > 0$ for $j = 1, 2, \dots, n$.}
\end{equation}
Having computed a strictly complementary solution $(\hat{x},  \hat{\lambda}, \hat{s}, \hat{\tau}, \hat{\kappa})$, we can compute the optimal solution for \eqref{eq:primal} and \eqref{eq:dual} or determine whether they are infeasible based on the following criteria.
\begin{enumerate}
    \item $\hat{\tau} > 0$ if and only if \eqref{eq:primal} and \eqref{eq:dual} have an optimal solution. The optimal solution is given by $(x^*,\lambda^*,s^*) = (\hat{x}/\hat{\tau}, \hat{\lambda}/\hat{\tau}, \hat{s}/\hat{\tau})$. Note that, in this case, $\hat{\kappa} = 0$ due to complementarity.
\end{enumerate}
If $\hat{\kappa} > 0$ then at least one of $-c^T\hat{x}$ and $b^T\hat{\lambda}$ must be positive. 
\begin{enumerate}
  \setcounter{enumi}{1}
    \item If $\hat{\kappa} > 0$ and if $-c^T \hat{x} > 0$ then \eqref{eq:dual} is infeasible. 

    \item If $\hat{\kappa} > 0$ and if $b^T \hat{\lambda} > 0$ then \eqref{eq:primal} is infeasible. 
\end{enumerate}
For the proofs of the above results and for more details, refer to \cite{Andersen2000,Wright1997,Xu1996}. 

We formulate a merit function for the simplified homogeneous formulation in \Cref{sec:mf_for_hlf}. The new merit function is not much more complicated than \eqref{eq:merit_function}, as it includes only two additional variables, and can be solved using the same approach (see \Cref{alg:Newton_with_LM} and \ref{alg:Newton_for_HMF}) as the one we use to solve \eqref{eq:merit_function}.

Having said the above, when testing \Cref{alg:Newton_with_LM} and \ref{alg:Newton_for_HMF} on an unbounded problem, we could correctly identify the problem to be unbounded without relying on the simplified homogeneous formulation. See \Cref{sec:unb_lin_prog} for more details.



\subsection{The Hessian of the merit function}

Second-order methods are a standard approach for solving \eqref{eq:merit_function} and, therefore, it is imperative to understand the properties of the Hessian of the merit function. In this section, we show that the Hessian of $f_q(x,\lambda, s)$ has two notable properties: i) it is Lipschitz continuous, but ii) singular at some points in the domain of $f_q$.

Let us first define 
\begin{equation}
    \begin{aligned}
        \gamma & :=  c^T x - b^T\lambda \\
        \rho & := b - Ax \\
        \sigma & := c - A^T\lambda - s 
    \end{aligned}
\end{equation}
The gradient and Hessian of $f_q(x,\lambda,s)$ are given by 
$$\nabla f_q(x, \lambda, s) = \gamma \begin{pmatrix}
    c \\ -b \\ 0
\end{pmatrix} - \begin{pmatrix}
    A^T\rho \\ A\sigma \\ \sigma 
\end{pmatrix} - \frac{1}{q-1} \begin{pmatrix}
    (-x)_+^{q-1} \\ 0_m \\ (-s)_+^{q-1}
\end{pmatrix} $$
and
$$ \nabla^2 f_q(x, \lambda, s) = H + \diag((-x)_+^{q-2}, 0_m , (-s)_+^{q-2}), $$
where  
\begin{equation}\label{eq:H}
    H = \begin{pmatrix}
    cc^T+A^TA & -cb^T & 0 \\
    -bc^T & bb^T+AA^T & A \\
    0 & A^T & I 
\end{pmatrix},
\end{equation}
and where $(-x)_+^{q}$ is defined in \eqref{eq:relu_def}. Note that if $(x,\lambda,s)$ satisfies optimality conditions \eqref{eq:optimality_conditions} then the gradient at $(x,\lambda,s)$ is identically zero. However, the converse is not always true, for example, when \eqref{eq:primal} is unbounded. 

For $(x, s) \geq 0$, the Hessian is singular as it is equal to $H$, whose rank is at most $m+n+1$.
\begin{lemma}
    The rank of matrix $H$ defined in \eqref{eq:H} is at most $m+n+1$.
\end{lemma}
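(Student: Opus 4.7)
My plan is to exhibit a rectangular factorization $H = M^T M$ where $M$ has only $1+m+n$ rows; the desired bound on the rank of $H$ then follows immediately from the identity $\rank(M^T M) = \rank(M)$.

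The natural choice of $M$ comes from reading off the smooth (quadratic) part of the merit function \eqref{eq:merit_function} as $\tfrac{1}{2}\|Mz - d\|_2^2$, where $z = (x,\lambda,s)^T$. Stacking the three linear residuals $c^Tx - b^T\lambda$, $Ax - b$, and $A^T\lambda + s - c$ row-wise suggests taking
\begin{equation*}
M = \begin{pmatrix} c^T & -b^T & 0 \\ A & 0 & 0 \\ 0 & A^T & I \end{pmatrix} \in \mathbb{R}^{(1+m+n)\times(2n+m)}, \qquad d = \begin{pmatrix} 0 \\ b \\ c \end{pmatrix}.
\end{equation*}
A direct block multiplication of $M^T M$ then reproduces the three block rows of $H$ in \eqref{eq:H}. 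This computation is routine: the first column of $M^T$ contributes the rank-one piece $(c,-b,0)^T(c^T,-b^T,0)$, while the remaining columns contribute the pieces involving $A^T A$, $A A^T$, $A$, $A^T$ and $I$ in exactly the required positions.

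Once $H = M^T M$ is established, standard linear algebra gives $\rank(H) = \rank(M)$. Since $M$ has $1 + m + n$ rows, $\rank(M) \le 1 + m + n = m+n+1$, which concludes the argument.

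\textbf{Anticipated difficulty.} I do not expect any real obstacle: the only thing to guess is the factor $M$, and this is forced once one recognises that the smooth part of $f_q$ is itself a sum of squares whose summands are the rows of $M z - d$. The rest is a block-matrix multiplication and the standard rank identity for Gram matrices.
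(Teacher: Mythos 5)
Your proof is correct and rests on the same underlying observation as the paper's: $H$ is the Gram matrix of the linear residual map $(x,\lambda,s)\mapsto(c^Tx-b^T\lambda,\,Ax,\,A^T\lambda+s)$, whose codomain has dimension $1+m+n$. The paper packages this as $H = R^TR + \mathrm{blkdiag}(A^TA,\,B^TB)$ with $B=(A^T\ I)$ and invokes subadditivity of rank, whereas you stack all $1+m+n$ residual rows into a single factor $M$ and use $\rank(M^TM)=\rank(M)$ — a cosmetic difference that, if anything, is slightly cleaner and gives $H\succeq 0$ for free.
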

\begin{proof}
    We can express $H$ as a sum of two matrices, whose ranks are at most $1$ and $m+n$:
    $$H = 
    R^TR + \begin{pmatrix}
    A^TA & 0  \\
    0 & B^TB
\end{pmatrix},$$
where $R = \begin{pmatrix}
    c^T & -b^T & 0^T_n
\end{pmatrix}$ and $B = \begin{pmatrix}
    A^T & I
\end{pmatrix}$. The result follows from the subadditivity of rank.
\end{proof}

\begin{rem}
    Suppose that \eqref{eq:primal} and \eqref{eq:dual} have an optimal solution $(x^*, \lambda^*, s^*)$. By \Cref{thm:equivalence}, $(x^*, \lambda^*, s^*)$ must be one of the minimizers of $f_q(x, \lambda, s)$. Since $(x^*, \lambda^*, s^*) \geq 0$, $\nabla^2 f_q(x^*, \lambda^*, s^*) = H$. In other words, the Hessian $\nabla^2 f_q(x^*, \lambda^*, s^*)$ is singular at any optimal solution $(x^*, \lambda^*, s^*)$ of \eqref{eq:primal} and \eqref{eq:dual}.
\end{rem}

The good news is that the Hessian is Lipschitz continuous for $q = 3$ (see \Cref{def:Lipschitz}). 
\begin{lemma}\label{lemma:Hessian_is_Lipschitz}
    For $q = 3$, the Hessian $\nabla^2 f_q(x, \lambda, s)$ is $1$-Lipschitz. 
\end{lemma}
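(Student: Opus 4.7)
The plan is to exploit the fact that when $q=3$, the only non-constant part of $\nabla^2 f_q$ is the diagonal piece, so the Lipschitz estimate reduces to a one-dimensional computation entry-by-entry.

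First I would write down $\nabla^2 f_3$ explicitly from the formula preceding the lemma:
\[
    \nabla^2 f_3(x, \lambda, s) \;=\; H \;+\; \diag\bigl((-x)_+,\, 0_m,\, (-s)_+\bigr),
\]
using that $q-2 = 1$. Since $H$ does not depend on $(x,\lambda,s)$, for any two points $(x,\lambda,s)$ and $(x',\lambda',s')$ the difference telescopes to
\[
    \nabla^2 f_3(x,\lambda,s) - \nabla^2 f_3(x',\lambda',s')
    \;=\; \diag\bigl((-x)_+-(-x')_+,\; 0_m,\; (-s)_+-(-s')_+\bigr).
\]
The spectral norm of a diagonal matrix equals the maximum of the absolute values of its diagonal entries, so
\[
    \bigl\|\nabla^2 f_3(x,\lambda,s) - \nabla^2 f_3(x',\lambda',s')\bigr\|_2
    \;=\; \max_{j} \Bigl\{\bigl|\max\{-x_j,0\}-\max\{-x'_j,0\}\bigr|,\;\bigl|\max\{-s_j,0\}-\max\{-s'_j,0\}\bigr|\Bigr\}.
\]

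Next I would invoke the elementary fact that the scalar map $t \mapsto \max\{-t,0\}$ is $1$-Lipschitz on $\mathbb{R}$ (it is the composition of negation with the ReLU function), giving the pointwise bounds $|\max\{-x_j,0\}-\max\{-x'_j,0\}| \le |x_j-x'_j|$ and similarly for $s_j$. Combining with the previous display,
\[
    \bigl\|\nabla^2 f_3(x,\lambda,s) - \nabla^2 f_3(x',\lambda',s')\bigr\|_2
    \;\le\; \max_{j}\bigl\{|x_j-x'_j|,\,|s_j-s'_j|\bigr\}
    \;\le\; \bigl\|(x,\lambda,s)-(x',\lambda',s')\bigr\|_2,
\]
where the last inequality is just $\|\cdot\|_\infty \le \|\cdot\|_2$ applied to the stacked vector of differences. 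This matches \Cref{def:Lipschitz} with $L=1$.

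I do not anticipate a serious obstacle: the only subtle point is recognizing that the $\lambda$-block contributes a zero diagonal and hence does not affect the spectral-norm calculation (so the $\lambda$-component of the distance can only make the right-hand side larger, preserving the inequality). No Cauchy--Schwarz, no triangle inequality on matrix pieces, and no use of the structure of $A$, $b$, $c$ are required, because $H$ is annihilated by the subtraction.
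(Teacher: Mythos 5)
Your proof is correct and follows essentially the same route as the paper: both isolate the diagonal difference (since $H$ is constant) and reduce to the $1$-Lipschitzness of the scalar map $t \mapsto \max\{-t,0\}$. The only cosmetic difference is the final norm step --- the paper bounds the spectral norm by the Frobenius norm and sums the squared entries, whereas you use the exact spectral norm of a diagonal matrix together with $\|\cdot\|_\infty \leq \|\cdot\|_2$; both yield $L=1$.
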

\begin{proof}
    For any $(x,\lambda,s)$ and $(x',\lambda',s')$ we have
\begin{equation}
    \begin{aligned}
        \| \nabla^2f_3(x,\lambda,s) - \nabla^2f_3(x',\lambda',s') \|_2^2 & = \| \diag((-x)_+-(-x')_+, 0_m , (-s)_+-(-s')_+)) \|_2^2 \\
        & \leq \| \diag((-x)_+-(-x')_+, 0_m , (-s)_+-(-s')_+)) \|_F^2 \\
        & \leq \sum_{j=1}^n (x_j - x'_j)^2+ (s_j - s'_j)^2 \\
        & \leq \sum_{j=1}^n (x_j - x'_j)^2+ (s_j - s'_j)^2 + \sum_{i=1}^m (\lambda_j - \lambda'_j)^2,
    \end{aligned}
\end{equation}
where the penultimate inequality follows from $|\max\{-y,0\} - \max\{-y',0\}| \leq |y - y'|$ for any scalars $y$ and $y'$.
\end{proof}




\section{Regularized Newton method to minimize the merit function}\label{sec:reg_Newton}

Classical methods to solve \eqref{eq:merit_function} include gradient descent, Newton, Gauss-Newton, quasi-Newton and trust-region methods \cite{Boyd2004, Conn2000, Nocedal2006}. Gradient descent method is known to perform poorly for convex functions with even mildly ill-conditioned Hessians \cite{Boyd2004}, let alone singular Hessians. 
Functions with singular Hessians pose challenges for second-order methods, too. For example, defining Newton search direction, which is determined by solving a linear system that involves the Hessian, becomes impossible. Therefore, second-order methods must be applied with special care when minimizing functions with singular Hessians.

Numerous methods have been proposed for minimizing convex functions with singular Hessians such as cubic regularization \cite{Cartis2010, Cartis2011_1, Cartis2011_2, Hanzely2020, Nesterov2006, Yue2019}, trust-region-based \cite{Conn2000, Fan2014, Ueda2014, Wang2023} and Levenberg-Marquardt \cite{Levenberg1944, Marquardt1963, Polyak2009, Ueda2010, Yamashita2001} methods. Below, we briefly describe the Levenberg-Marquardt approach and, using existing results, in 
\Cref{thm:MF_global_convergence} we prove that Newton method with Levenberg-Marquardt regularization (\Cref{alg:Newton_with_LM}) converges globally to an optimal solution of the primal-dual pair \eqref{eq:primal} and \eqref{eq:dual} in at most $O(\epsilon^{-3/2})$ iterations.

For details on cubic regularization and trust-region-based approaches, we refer the reader to the respective literature. 

%

\paragraph{Classical Newton method.} Recall that, for a twice continuously differentiable function $f$ at iterate $x^k$, a standard Newton step is defined as
$$x^{k+1} = x^k - \alpha^k \nabla^2 f(x^k)^{-1} \nabla f(x^k),$$
where $\alpha^k$ is the step length that is commonly defined by Armijo backtracking, Wolfe, Goldstein conditions or set to 1 for all $k \geq 1$, in which case it is pure Newton \cite{Nocedal2006}. For strongly convex $f$, $\nabla^2 f(x^k)$ is positive definite so that $-\nabla^2 f(x^k)^{-1} \nabla f(x^k)$ is a descent direction at all iterations. For strongly convex functions, Newton method with backtracking line search is globally convergent and, with the additional assumption of a Lipschitz continuous Hessian, one can show that it enjoys local quadratic convergence to the global minimizer \cite{Boyd2004}. For general convex functions, however, such properties are not guaranteed since Newton step may not even be well defined as the inverse of $\nabla^2 f(x^k)$ may not exist. When this happens for a convex $f$, it implies that some eigenvalues of its Hessian $\nabla^2 f(x^k)$ are equal to zero, which means that $f$ is locally flat in certain directions. This is the case for our merit function $f$ in \eqref{eq:merit_function}. For $(x, s) \geq 0$, the Hessian $\nabla^2 f_q(x, \lambda, s)$ has rank at most $m+n+1$ and therefore $f_q(x, \lambda, s)$ is locally constant along a subspace of dimension at least $n-1$.

\subsection{Levenberg-Marquardt regularization}

\begin{algorithm}[t]
\caption{Newton method with Levenberg–Marquardt regularization}
\label{alg:Newton_with_LM}
\begin{algorithmic}[1]
\State Initialize $x^0, s^0 \in \mathbb{R}^n$, $\lambda^0 \in \mathbb{R}^m$ and $\mu^0 > 0$.
\For{$k = 0, 1, \dots$}
    \State Let $\nabla^2 f^k := \nabla^2 f_q(x^k, \lambda^k, s^k)$ and $\nabla f^k := \nabla f_q(x^k, \lambda^k, s^k)$, where $f_q$ is defined in \eqref{eq:merit_function}.
    \State Update
    \begin{equation}\label{eq:mf_update}
        \begin{pmatrix}
        x^{k+1} \\
        \lambda^{k+1} \\
        s^{k+1}
    \end{pmatrix}
    =
    \begin{pmatrix}
        x^k \\
        \lambda^k \\
        s^k
    \end{pmatrix}
    - \alpha^k (\nabla^2 f^k + \mu^k I)^{-1} \nabla f^k,
    \end{equation}
    \State where $\alpha^k$ is equal to $1$ or chosen using a line search method such as Armijo backtracking.
    \State Update $\mu^{k+1}$.
\EndFor
\end{algorithmic}
\end{algorithm}

In the Levenberg-Marquardt approach (see \Cref{alg:Newton_with_LM}), the Hessian is modified by adding $\mu^k I $ for some positive parameter $\mu^k$, rendering the Hessian positive definite. When $\mu^k$ is small but positive, the Newton step is well defined and the modified Hessian does not deviate much from the original Hessian. The sequence of parameters $\mu^k$ can be defined in various ways. Perhaps the simplest idea is to set $\mu^k$ to some small scalar $\mu > 0$ so that the modified Hessian is sufficiently positive definite \cite[Section 3.4]{Nesterov2006}. One can prove that Levenberg-Marquardt with fixed $\mu^k$ is globally convergent given that the condition numbers of the Hessians $\nabla^2 f (x^k)$ are uniformly bounded above (see \cite{More1984} and \cite[Section 3.4]{Nesterov2006}).  In the literature, $\mu^k$ has also commonly been expressed in terms $\| \nabla f(x^k) \|$ as done in Polyak~\cite{Polyak2009} who proves global convergence for convex functions and establishes $O(\epsilon^{-4})$ global convergence rate provided that the Hessian is Lipschitz continuous and that the level set of $f$ at the initial point $x^0$ is compact. Ueda and Yamashita~\cite{Ueda2010} extended Polyak's results to nonconvex functions and suggest $\mu^k \propto \| \nabla f(x^k) \|^{\delta}$. They show that with $\delta \leq 1/2$ the global convergence rate improves to $O(\epsilon^{-2})$. Recently, Mishchenko~\cite{Mishchenko2023} improved these results showing that with the same assumptions we can achieve $O(\epsilon^{-1/2})$ rate with $\mu^k = \sqrt{ (L/2) \| \nabla f(x^k) \|}$ where $L$ is the Lipschitz constant. Below we state Mishchenko's result formally without proof. 

\begin{theorem}\cite[Theorem 1]{Mishchenko2023} \label{thm:LM_global_convergence}
    Let $f : \mathbb{R}^n \rightarrow \mathbb{R}$ be convex and let the following assumptions hold
    \begin{enumerate}[label=Assumption \arabic*., leftmargin=*]
        \item the Hessian of $f$ is $L$-Lipschitz.
        \item the objective function $f$ has a finite optimum $x^*$ such that $f(x^*) = \min_{x \in \mathbb{R}^n} f(x)$, and the level set $\{ x \in \mathbb{R}^n : f(x) \leq f(x^0) \}$ is bounded.
    \end{enumerate}
    Then, Newton method with Levenberg-Marquardt regularization with $\mu^k = \sqrt{L\| \nabla f(x^k) \|/2}$\\ and $\alpha^k = 1$ for $k \geq 0$ achieves $f(x^k) - f(x^*) < \epsilon$ in at most $O(\epsilon^{-1/2})$ iterations.
\end{theorem}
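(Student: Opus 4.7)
The plan is to follow the template of the Nesterov--Polyak cubic regularization analysis, leveraging the observation that with $\mu^k = \sqrt{L\|\nabla f(x^k)\|_2/2}$ the Levenberg--Marquardt step implicitly minimizes, up to universal constants, a cubic upper model of $f$ around $x^k$. First I would derive a per-iteration descent inequality from the $L$-Lipschitz Hessian cubic bound
\begin{equation*}
f(x^{k+1}) \leq f(x^k) + \nabla f(x^k)^\top d^k + \tfrac{1}{2}(d^k)^\top \nabla^2 f(x^k)\, d^k + \tfrac{L}{6}\|d^k\|_2^3,
\end{equation*}
where $d^k := -(\nabla^2 f(x^k) + \mu^k I)^{-1}\nabla f(x^k)$ is the step. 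Substituting the normal equation $(\nabla^2 f(x^k) + \mu^k I)\, d^k = -\nabla f(x^k)$ into the linear and quadratic terms and using convexity $\nabla^2 f(x^k) \succeq 0$ should produce, after rearrangement,
\begin{equation*}
f(x^k) - f(x^{k+1}) \geq \tfrac{1}{2}(d^k)^\top(\nabla^2 f(x^k) + \mu^k I)\, d^k + \tfrac{\mu^k}{2}\|d^k\|_2^2 - \tfrac{L}{6}\|d^k\|_2^3.
\end{equation*}

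Next I would convert the right-hand side into a lower bound depending only on $g^k := \|\nabla f(x^k)\|_2$. The estimate $\|d^k\|_2 \leq g^k/\mu^k$, immediate from $\nabla^2 f(x^k) + \mu^k I \succeq \mu^k I$, combined with $\mu^k = \sqrt{Lg^k/2}$, bounds the cubic remainder by $L\|d^k\|_2^3/6 = O((g^k)^{3/2}/\sqrt{L})$. Expanding in the eigenbasis of $\nabla^2 f(x^k)$ with eigenvalues $\beta_i \geq 0$ and corresponding gradient components $g_i$,
\begin{equation*}
(d^k)^\top(\nabla^2 f(x^k) + \mu^k I)d^k = \sum_i \frac{g_i^2}{\beta_i + \mu^k}, \qquad \|d^k\|_2^2 = \sum_i \frac{g_i^2}{(\beta_i + \mu^k)^2},
\end{equation*}
and a case analysis separating the spectral regimes $\beta_i \lesssim \mu^k$ and $\beta_i \gg \mu^k$ should yield the key lower bound $f(x^k) - f(x^{k+1}) \geq c\, L^{-1/2}(g^k)^{3/2}$ for an absolute $c > 0$. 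The bounded-sublevel-set hypothesis is used here via the monotone descent just established and continuity of $\nabla^2 f$ to control $\max_i \beta_i$ uniformly along the trajectory.

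Finally, the bounded sublevel set confines the iterates to a ball of some diameter $D$ around a minimizer $x^*$, so convexity delivers $\delta^k := f(x^k) - f(x^*) \leq \nabla f(x^k)^\top(x^k - x^*) \leq D g^k$. Substituted into the per-iteration descent, this gives the recursion $\delta^k - \delta^{k+1} \geq c'(\delta^k)^{3/2}$, which by the standard comparison with the ODE $\dot{\delta} = -c' \delta^{3/2}$ yields $\delta^k = O(k^{-2})$ and hence $\delta^k < \epsilon$ after $O(\epsilon^{-1/2})$ iterations. I expect the most delicate step to be the spectral analysis that extracts the sharp $(g^k)^{3/2}$ dependence: a naive bound produces only $(g^k)^2/(\max_i \beta_i + \mu^k)$, which degrades to $O((g^k)^2)$ in the low-gradient regime $\mu^k \ll \max_i \beta_i$ and would yield the slower rate $O(\epsilon^{-1})$. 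Recovering the claimed $O(\epsilon^{-1/2})$ seems to require carefully balancing the two descent terms $\tfrac{1}{2}(d^k)^\top(\nabla^2 f(x^k) + \mu^k I)\, d^k$ and $\tfrac{\mu^k}{2}\|d^k\|_2^2$ with a weighting optimized over the eigenvalue profile, precisely in the spirit of the original cubic-regularization argument, and the scaling $\mu^k \propto \sqrt{g^k}$ is exactly what makes those two terms comparable.
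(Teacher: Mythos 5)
First, a point of reference: the paper does not prove this statement at all --- it is imported verbatim from Mishchenko's paper and explicitly stated ``without proof'' --- so your sketch can only be compared against the argument in that reference. Your skeleton matches the standard template and is correct as far as it goes: the cubic upper bound from the $L$-Lipschitz Hessian, elimination of the linear term via the normal equation $(\nabla^2 f(x^k)+\mu^k I)d^k=-\nabla f(x^k)$, the estimate $\|d^k\|\le \|\nabla f(x^k)\|/\mu^k$ which turns the remainder $\tfrac{L}{6}\|d^k\|^3$ into $\tfrac{\mu^k}{3}\|d^k\|^2$, the convexity bound $\delta^k\le D\,\|\nabla f(x^k)\|$ on the bounded level set, and the ODE comparison at the end. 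The genuine gap is the pivotal step in between: the claimed per-iteration bound $f(x^k)-f(x^{k+1})\ge c\,L^{-1/2}\|\nabla f(x^k)\|^{3/2}$ is false, and no spectral balancing of the two descent terms can repair it. Your ``naive'' estimate is in fact the truth of the matter: both $\tfrac12 (d^k)^\top(\nabla^2 f+\mu^k I)d^k=\tfrac12 \nabla f(x^k)^\top(\nabla^2 f+\mu^k I)^{-1}\nabla f(x^k)$ and $\tfrac{\mu^k}{2}\|d^k\|^2$ are of order $\|\nabla f(x^k)\|^2/\beta_{\max}$ once $\beta_{\max}\gg\mu^k$, and for a strongly convex quadratic perturbed by an arbitrarily small cubic (so $L>0$ is fixed) the actual decrease per step really is $\Theta(\|\nabla f(x^k)\|^2/\beta_{\max})$, which is $o(L^{-1/2}\|\nabla f(x^k)\|^{3/2})$ as the gradient tends to zero. (The target recursion is not violated in that example --- the function value collapses geometrically --- but the route to it through $\|\nabla f(x^k)\|^{3/2}$ is a dead end.)

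The missing idea in the cited proof is to measure progress by the gradient at the \emph{next} iterate. From the Lipschitz-Hessian bound and the normal equation, $\|\nabla f(x^{k+1})\|\le\|\nabla f(x^k)+\nabla^2 f(x^k)d^k\|+\tfrac{L}{2}\|d^k\|^2=\mu^k\|d^k\|+\tfrac{L}{2}\|d^k\|^2\le 2\mu^k\|d^k\|$, the last step again using $\|d^k\|\le\|\nabla f(x^k)\|/\mu^k$ together with $\mu^k=\sqrt{L\|\nabla f(x^k)\|/2}$. Combined with the descent $f(x^k)-f(x^{k+1})\ge\tfrac{2\mu^k}{3}\|d^k\|^2$ this gives $f(x^k)-f(x^{k+1})\ge\|\nabla f(x^{k+1})\|^2/(6\mu^k)$, and the potential/telescoping argument must then be run with this shifted-index quantity (paired with $\delta^{k+1}\le D\|\nabla f(x^{k+1})\|$ and with control of $\mu^k$ through $\|\nabla f(x^k)\|$); that is where the real work of extracting the $O(\epsilon^{-1/2})$ iteration count lies. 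A minor correction as well: the bounded-level-set hypothesis is used to furnish the diameter $D$ and to guarantee that a minimizer is attained, not to ``control $\max_i\beta_i$'' --- and bounding $\beta_{\max}$ would not rescue the $\|\nabla f(x^k)\|^{3/2}$ bound for the reason above.
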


\subsection{Global convergence and its rate}
Using \Cref{thm:LM_global_convergence} and the fact that the solution sets of \eqref{eq:merit_function} and the primal-dual pair \eqref{eq:primal} and \eqref{eq:dual} coincide when \eqref{eq:primal} and \eqref{eq:dual} have an optimal solution, we can show that Newton method with Levenberg-Marquardt regularization (\Cref{alg:Newton_with_LM}) provides a globally convergent scheme for finding an optimal solution to \eqref{eq:primal} and \eqref{eq:dual}:
\begin{theorem}\label{thm:MF_global_convergence}
    Suppose that the primal-dual pair \eqref{eq:primal} and \eqref{eq:dual} have an optimal solution and that the set of optimal solutions to \eqref{eq:primal} and \eqref{eq:dual} is bounded. Then, minimizing the merit function $f_3(x,\lambda, s)$ defined in \eqref{eq:merit_function} using Newton method with Levenberg-Marquardt regularization (\Cref{alg:Newton_with_LM}) with $\mu^k = \sqrt{\| \nabla f(x^k) \|/2}$ and $\alpha^k = 1$ for $k \geq 0$ achieves 
    \begin{align}
    \| c^T x^k - b^T \lambda^k \| &\leq \epsilon, 
    &&\label{eq:iterate_less_epsilon_1} \\
    \| A x^k - b \| &\leq \epsilon, 
    &&\label{eq:iterate_less_epsilon_2} \\
    \| A^T \lambda^k + s^k - c \| &\leq \epsilon, 
    &&\label{eq:iterate_less_epsilon_3} \\
    x^k_j &\geq -\epsilon \quad \text{for } j = 1, \dots, n, 
    &&\label{eq:iterate_less_epsilon_4} \\
    s^k_j &\geq -\epsilon \quad \text{for } j = 1, \dots, n 
    &&\label{eq:iterate_less_epsilon_5}
\end{align}
    in at most $O(\epsilon^{-3/2})$ iterations.
\end{theorem}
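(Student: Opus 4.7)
The plan is to invoke \Cref{thm:LM_global_convergence} with $f := f_3$ and then extract the five error bounds from a sufficiently small value of the merit function. Three of the hypotheses are essentially immediate: the merit function $f_3$ is convex by construction, its Hessian is $1$-Lipschitz by \Cref{lemma:Hessian_is_Lipschitz}, and \Cref{thm:equivalence} together with the assumption that \eqref{eq:primal} and \eqref{eq:dual} have an optimal solution guarantees the existence of a finite minimizer $(x^*,\lambda^*,s^*)$ at which $f_3(x^*,\lambda^*,s^*) = 0$. The only nontrivial hypothesis to verify is boundedness of the initial level set $\mathcal{L}_0 := \{(x,\lambda,s) : f_3(x,\lambda,s) \leq f_3(x^0,\lambda^0,s^0)\}$.

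This level-set boundedness is the main obstacle, and the route I would take is a standard convex-analytic one: for a lower semicontinuous proper convex function on a Euclidean space, all nonempty sublevel sets are bounded if and only if its set of minimizers is nonempty and bounded (equivalently, its recession function has $\{0\}$ as its only zero). By \Cref{thm:equivalence} the set of minimizers of $f_3$ coincides with the optimal set of the primal-dual pair, which is assumed bounded; hence every sublevel set of $f_3$ is bounded, and in particular so is $\mathcal{L}_0$. Having checked all hypotheses, \Cref{thm:LM_global_convergence} yields $f_3(x^k,\lambda^k,s^k) \leq \epsilon'$ after at most $O((\epsilon')^{-1/2})$ iterations of \Cref{alg:Newton_with_LM} with the stated choice of $\mu^k$ and $\alpha^k$.

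It remains to convert the bound $f_3(x^k,\lambda^k,s^k) \leq \epsilon'$ into the five inequalities \eqref{eq:iterate_less_epsilon_1}--\eqref{eq:iterate_less_epsilon_5}. Since each summand in the definition of $f_3$ is nonnegative, each one is dominated by $f_3$ itself: the squared terms give $|c^Tx^k - b^T\lambda^k|$, $\|Ax^k-b\|_2$ and $\|A^T\lambda^k+s^k-c\|_2$ all bounded by $\sqrt{2\epsilon'}$, while the penalty terms $\tfrac{1}{6}\max\{-x_j^k,0\}^3$ and $\tfrac{1}{6}\max\{-s_j^k,0\}^3$ (using $q=3$) yield $\max\{-x_j^k,0\}, \max\{-s_j^k,0\} \leq (6\epsilon')^{1/3}$, which is the same as $x_j^k, s_j^k \geq -(6\epsilon')^{1/3}$. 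Requiring all five quantities to be bounded by $\epsilon$ amounts to choosing $\epsilon' = \Theta(\min\{\epsilon^2, \epsilon^3\}) = \Theta(\epsilon^3)$ for small $\epsilon$, so that the penalty-term bound is the binding constraint. The iteration count is then $O((\epsilon')^{-1/2}) = O(\epsilon^{-3/2})$, which is the claimed rate. Everything except the level-set step is routine bookkeeping; the convex-analysis argument for bounded sublevel sets is where the assumption that the optimal set of \eqref{eq:primal}--\eqref{eq:dual} is bounded actually gets used.
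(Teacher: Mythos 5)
Your proposal is correct and follows essentially the same route as the paper: verify the Lipschitz-Hessian hypothesis via \Cref{lemma:Hessian_is_Lipschitz}, obtain boundedness of the level sets from boundedness of the minimizer set (the paper cites Polyak for this standard convex-analysis fact, while you invoke the recession-cone characterization directly, which is the same fact), apply \Cref{thm:LM_global_convergence} together with $f_3(x^*,\lambda^*,s^*)=0$ from \Cref{thm:equivalence}, and then note that the cube-root dependence of the max-penalty terms forces the target $\epsilon' = \Theta(\epsilon^3)$ and hence the $O(\epsilon^{-3/2})$ count. No gaps.
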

\begin{proof}
    In \Cref{lemma:Hessian_is_Lipschitz} we showed that $f_q(x,\lambda, s)$ for $q = 3$ is $1$-Lipschitz, so the first assumption in \Cref{thm:LM_global_convergence} is satisfied for $f_3(x,\lambda, s)$ and $L = 1$. 

    Now, suppose that the primal-dual pair \eqref{eq:primal} and \eqref{eq:dual} have an optimal solution and that the set of optimal solutions to \eqref{eq:primal} and \eqref{eq:dual} is a bounded set. The solution sets of \eqref{eq:merit_function} and the primal-dual pair \eqref{eq:primal} and \eqref{eq:dual} coincide and hence the set of minimizers of $f_3(x,\lambda, s)$ is also a bounded set. This together with the fact that $f_3(x,\lambda, s)$ is bounded below by zero imply that $f_3(x,\lambda, s)$ has a finite optimum $(x^*, \lambda^*, s^*)$. Moreover, since $f_3(x,\lambda, s)$ is convex, the level set $\{ (x,\lambda, s) \in \mathbb{R}^{2n+m} : f_3(x,\lambda, s) \leq f_3(x^0,\lambda^0, s^0) \}$ is bounded for all $(x^0,\lambda^0, s^0) \in \mathbb{R}^{2n+m}$ (see \cite{Polyak2009}). \Cref{thm:LM_global_convergence} says that we have
    \begin{equation}\label{eq:f_3^k_minus_f_3_at_opt}
        f_3(x^k,\lambda^k, s^k) - f_3(x^*, \lambda^*, s^*) \leq \epsilon
    \end{equation}
    in at most $O(\epsilon^{-1/2})$ iterations. By \Cref{thm:equivalence}, we have $f_3(x^*, \lambda^*, s^*) = 0$ and hence \eqref{eq:f_3^k_minus_f_3_at_opt} becomes $f_3(x^k,\lambda^k, s^k) \leq \epsilon$. Since each term of $f_3(x^k,\lambda^k, s^k)$ is nonnegative, each term of $f_3(x^k,\lambda^k, s^k)$ must be less than $\epsilon$. In other words,
    \begin{equation}
        \begin{aligned}
            \| c^T x^k - b^T \lambda^k \| & \leq \sqrt{2 \epsilon} \\
            \| Ax^k-b \| & \leq \sqrt{2 \epsilon} \\
            \|A^T\lambda^k + s^k - c\| & \leq \sqrt{2 \epsilon} \\
            \max\{-x^k_j,0\} & \leq \sqrt[3]{q(q-1)\epsilon} \;\; \text{for $j = 1, \dots, n$} \\
            \max\{-s^k_j,0\} & \leq \sqrt[3]{q(q-1)\epsilon} \;\; \text{for $j = 1, \dots, n$}
        \end{aligned}
    \end{equation}
    is achieved in $O(\epsilon^{-1/2})$ iterations. Hence, \eqref{eq:iterate_less_epsilon_1}, \eqref{eq:iterate_less_epsilon_2} and \eqref{eq:iterate_less_epsilon_3}  are achieved in $O(\epsilon^{-1})$ iterations and \eqref{eq:iterate_less_epsilon_4}, \eqref{eq:iterate_less_epsilon_5} are achieved in $O(\epsilon^{-3/2})$ iterations.
\end{proof}
The assumption in \Cref{thm:LM_global_convergence} that the optimal set of \eqref{eq:primal} and \eqref{eq:dual} has to be bounded includes the common scenario of \eqref{eq:primal} and \eqref{eq:dual} having a unique solution. Furthermore, note that the number of iterations is independent of problem dimension. If at each iteration a linear system is solved in $O(m^3) + O(n^3)$ time (see \Cref{sec:linear_system}) then the total complexity of finding an optimal solution to \eqref{eq:primal} and \eqref{eq:dual} using \Cref{alg:Newton_with_LM} is $O((m^3+n^3) \epsilon^{-3/2})$. 

\subsection{Solving the linear system} 
\label{sec:linear_system}
Solving the linear system \eqref{eq:mf_update} accounts for most of the computational effort in \Cref{alg:Newton_with_LM}. The linear system has size $(2n+m) \times (2n+m)$, but we can reduce it to solving two linear systems of sizes $n\times n$ and $m \times m$. The linear system \eqref{eq:mf_update} with a more general diagonal perturbation of the Hessian can be written as

\begin{equation}
\begingroup\footnotesize
   \left( \begin{pmatrix}
    cc^T & -cb^T & 0 \\
    -bc^T & bb^T & 0 \\
    0 & 0 & 0 
\end{pmatrix} + 
    \begin{pmatrix}
    A^TA+D_1 & 0 & 0 \\
    0 & AA^T+D_2 & A \\
    0 & A^T & I + D_3
\end{pmatrix} \right)
\begin{pmatrix}
        \Delta x \\
        \Delta \lambda \\
        \Delta s
    \end{pmatrix} = \begin{pmatrix}
        -\nabla_x f \\
        -\nabla_{\lambda} f \\
        -\nabla_{s} f
    \end{pmatrix},
\endgroup
\end{equation}
where $D_1$, $D_2$ and $D_3$ are diagonal matrices with positive diagonal entries. We can express $\Delta s$ via $\Delta \lambda$ and remove it from the system:
\begin{equation}
\begingroup\footnotesize
\begin{aligned}
       \left( \begin{pmatrix}
    cc^T & -cb^T \\
    -bc^T & bb^T 
\end{pmatrix} + 
    \begin{pmatrix}
    A^TA+D_1 & 0 \\
    0 & AD_3(I+D_3)^{-1}A^T+D_2 
\end{pmatrix} \right)
\begin{pmatrix}
        \Delta x \\
        \Delta \lambda 
    \end{pmatrix} & = \begin{pmatrix}
        -\nabla_x f \\
        -\nabla_{\lambda} f + A(I+D_3)^{-1}\nabla_s f
    \end{pmatrix} \\
    \Delta s & = (I+D_3)^{-1} (-\nabla_s f - A^T \Delta \lambda),
\end{aligned}
\endgroup
\end{equation}
where, during algebraic manipulations, we have used the identity $I - D_3(I+D_3)^{-1} = (I+D_3)^{-1}$. Now, the system is block diagonal plus a rank-one update. We can solve this system by finding Cholesky decompositions of $A^TA+D_1$ and $AD_3(I+D_3)^{-1}A^T+D_2$ separately and then apply a Cholesky rank one update algorithm to the whole $(m+n)\times (m+n)$ block diagonal matrix \cite{Gill1974}. The complexity of the former is $O(m^3+n^3)$ and the complexity of the latter is $O((m+n)^2)$.

\section{Numerical considerations and results} \label{sec:numerics}

We begin this section with an example that illustrates the effects of singular Hessians on the contours of $f_q$ near the minimizer. In \Cref{sec:alg_enhacement}, we propose a slightly different merit function whose Hessians are nonsingular. With this new merit function we propose a heuristic approach that numerically performs better. In \Cref{sec:optimal_lp} and \Cref{sec:unb_lin_prog}, we test the performance of the old and new merit functions on synthetically generated random linear programs of varying dimensions. Our test set includes three linear programs with optimal solutions of dimensions $(m,n) = (100,150)$, $(200,300)$, 
 $(500,750)$ and one unbounded problem of dimension $(m,n) = (50,150)$. 

\subsection{A simple example}\label{sec:challenges}
Let us consider a simple two-dimensional function that resembles the form of merit function $f_q$:     
\begin{equation}\label{eq:mf_ex}
    (x_1+x_2)^2 + \max\{-x_1,0\}^3 + \max\{-x_2,0\}^3.
\end{equation}
The contour levels of this function are shown in the left plot in \Cref{fig:4.1}. First, note straight contour lines in the first quadrant, indicating that the Hessian is singular in that region. Moreover, we see banana-shaped contours near the optimal solution $x^*$; these are known to cause difficulties (recall Rosenbrock function \cite{Rosenbrock1960}) even for Newton's method as its quadratic model may not approximate the function well near the solution. 

The right plot in \Cref{fig:4.1} draws $\max\{-x, 0\}^3$ against its quadratic approximation near a point slightly less than zero. We see that the quadratic model approximates $\max\{-x, 0\}^3$ well in the neighbourhood of zero for $x<0$, but does not yield a good approximation for $x>0$.   
\begin{figure}[t]
    \centering
    \includegraphics[width=0.8\textwidth]{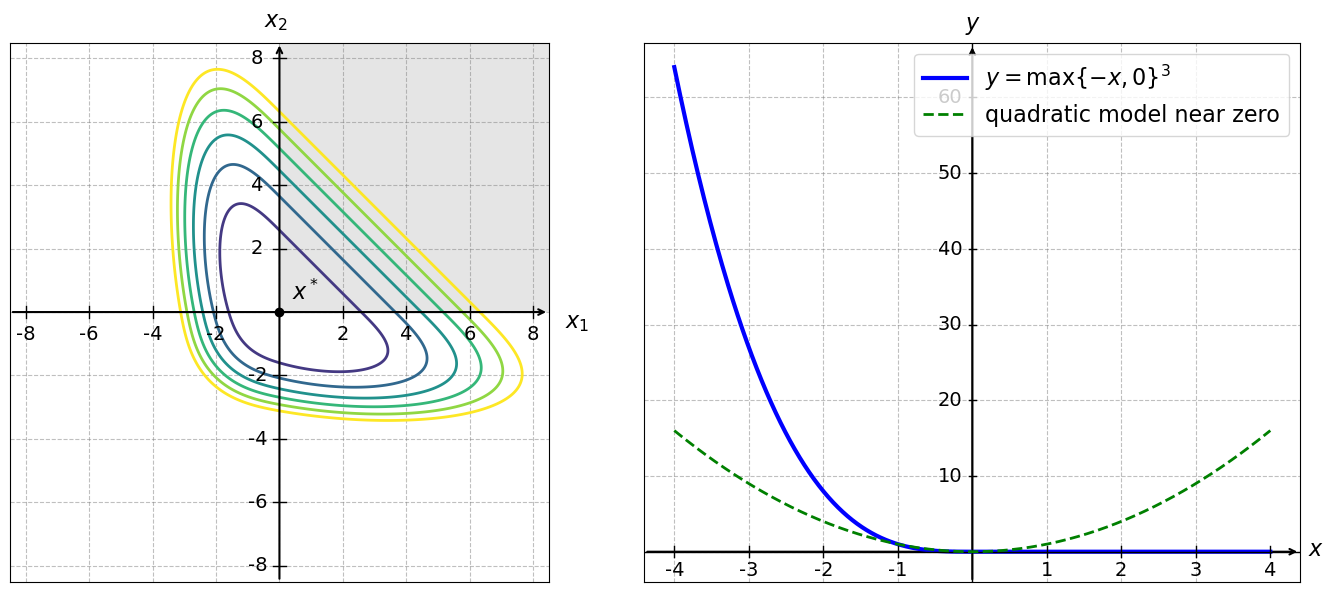} 
    \caption{Left: A contour plot of \eqref{eq:mf_ex}. Right: Illustration of $\max\{-x,0\}^3$ and its quadratic model near a point slightly less than zero.}
    \label{fig:4.1} 
\end{figure}

\subsection{The modified merit function and heuristic approach}\label{sec:alg_enhacement}
We propose the following modified merit function to address the challenges posed by formulation \eqref{eq:merit_function}: 
\begin{equation}\label{eq:homotopy_merit_function}\tag{HMF}
    \mathop{\arg\min}_{(x,\lambda,s) \in \mathbb{R}^{2n+m}}\, h_{q,\nu}(x,\lambda,s) := 
    f_q(x,\lambda,s) + \nu \| \lambda \|_2^2 + \frac{\nu}{q(q-1)}\sum_{j=1}^n \left( \max\{x_j,0\}^q + \max\{s_j,0\}^q \right),
\end{equation}
where $\nu > 0$ and where $f_q(x,\lambda,s)$ is the original merit function defined in \eqref{eq:merit_function}.
\begin{algorithm}[t]
\caption{Newton method with the modified merit function}
\label{alg:Newton_for_HMF}
\begin{algorithmic}[1]
\State Initialize $x^0, s^0 \in \mathbb{R}^n$, $\lambda^0 \in \mathbb{R}^m$ and $\nu^0 > 0$, $\theta \in \left(0, 1\right)$, $\mu^0 > 0$.
\For{$k = 0, 1, \dots$}
    \State Let $\nabla^2 h^k := \nabla^2 h_{q, \nu^k}(x^k, \lambda^k, s^k)$ and $\nabla h^k := \nabla h_{q, \nu^{k}}(x^k, \lambda^k, s^k)$, where $h_{q, \nu}$ is defined in \eqref{eq:homotopy_merit_function}.
    \State Update
    \begin{equation}\label{eq:hmf_update}
        \begin{pmatrix}
        x^{k+1} \\
        \lambda^{k+1} \\
        s^{k+1}
    \end{pmatrix}
    =
    \begin{pmatrix}
        x^k \\
        \lambda^k \\
        s^k
    \end{pmatrix}
    - \alpha^k (\nabla^2 h^k + \mu^k I)^{-1} \nabla h^k,
    \end{equation}
    \State where $\alpha^k$ is equal to $1$ or chosen using a line search method such as Armijo backtracking.
    \State Update $\nu^{k+1} = \theta \nu^k$.
    \State Update $\mu^{k+1}$.
\EndFor
\end{algorithmic}
\end{algorithm}
\begin{figure}[t]
    \centering
    \includegraphics[width=0.8\textwidth]{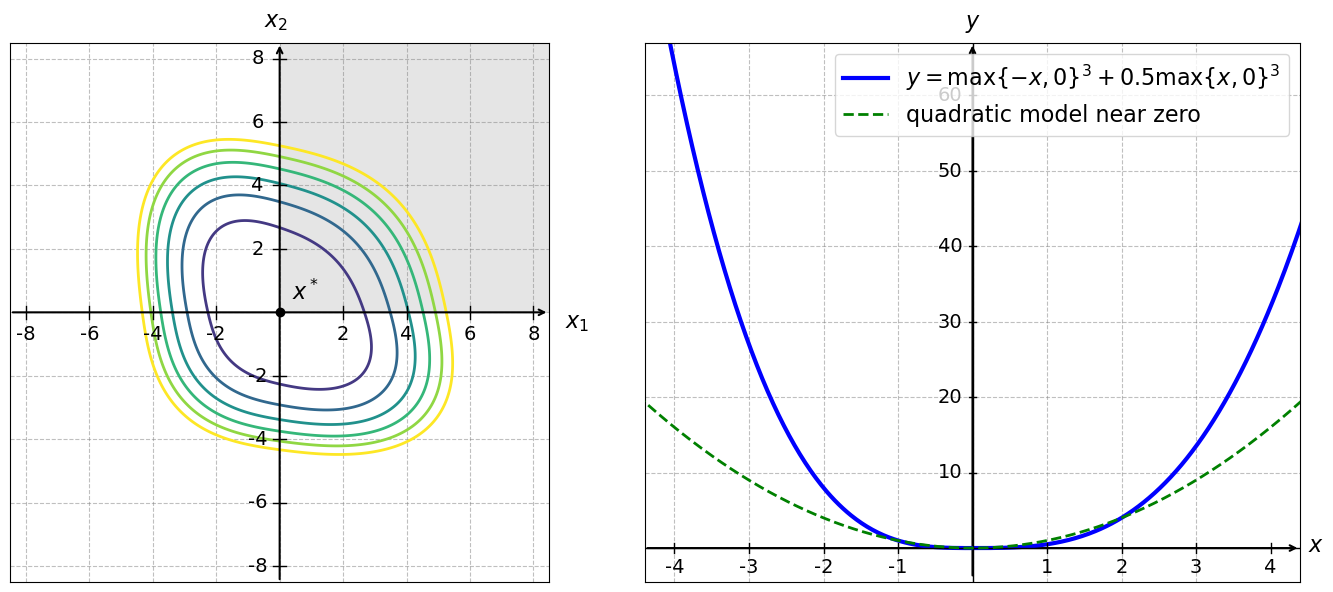} 
     \caption{Left: A contour plot of \eqref{eq:hmf_ex}. Right: Illustration of $\max\{-x,0\}^3 + 0.5\max\{x, 0\}^3$ and its quadratic model near zero.}
    \label{fig:4.2} 
\end{figure}

Most importantly, the Hessian of this new merit function is positive definite for all $(x,\lambda,s) \in \mathbb{R}^{2n+m}$ when $\nu > 0$. This new merit function can be thought of as an approximation to $f_q$. The approximation is better for smaller $\nu$ and, at $\nu = 0$, $h_{q, \nu}$ is identically equal to $f_q$. Instead of solving \eqref{eq:merit_function}, we propose to solve \eqref{eq:homotopy_merit_function} following the scheme of \Cref{alg:Newton_with_LM}, but with gradually decreasing $\nu$ at each subsequent iteration (see \Cref{alg:Newton_for_HMF}). We borrow the idea from homotopy methods \cite{Nazareth1986, Nazareth1991, Nocedal2006, Wright1997}: as $v$ approaches $0$, the new merit function $h_{q,\nu}$ approximates $f_q$ better and, hopefully, the iterates are gradually driven towards the optimal set of $f_q$. At each iteration we solve a nonsingular Newton system to find a suitable step that minimizes $h_{q,\nu^{k}}$ and then we set $\nu^{k+1} = \theta \nu^k$ where $\theta$ is strictly less than 1 to drive $\nu^k$'s to zero. For initially large values of $\nu$, contour lines of \eqref{eq:homotopy_merit_function} are near elliptical, but they gradually become banana-shaped as $\nu$ gets smaller. By that stage, we hope that the iterates are in the vicinity of the optimal set and fast local convergence takes over (see \Cref{fig:4.3}).

\begin{rem}
   In \eqref{eq:hmf_update}, we added the regularization term $\mu^k I$ to the Hessian (with $\mu^k \sim 10^{-9}$ in the experiments) as a safeguard against ill-conditioning. As $\nu^k$ becomes very small in the final iterations, the Hessian $\nabla^2 h^k$ of the modified merit function tends to become increasingly ill-conditioned, necessitating some regularization to maintain numerical stability.   
\end{rem}

\paragraph{A simple example revisited.} Let us revisit the function in \eqref{eq:mf_ex} but with additional max functions to resemble the form of \eqref{eq:homotopy_merit_function}. We would like to see the changes in contour lines and the quadratic approximation of max functions. Consider
\begin{equation}\label{eq:hmf_ex}
    (x_1+x_2)^2 + (\max\{-x_1,0\}^3 + \max\{-x_2,0\}^3) + 0.5(\max\{x_1,0\}^3 + \max\{x_2,0\}^3).
\end{equation}
The contour lines shown in \Cref{fig:4.2} are now close to elliptical -- a significant improvement over banana-shaped contours and more tractable for Newton's method. The right plot in \Cref{fig:4.2} also shows a better quadratic approximation of the max functions of \eqref{eq:hmf_ex} near zero.

\subsection{Linear programs with an optimal solution}\label{sec:optimal_lp}
We test the performance of \Cref{alg:Newton_with_LM} and \Cref{alg:Newton_for_HMF} on synthetically generated random linear programs of dimensions $(m,n) = (100,150)$, $(200,300)$, $(500,750)$. All three problems have optimal solutions. We test two variants of \Cref{alg:Newton_with_LM} and one variant of \Cref{alg:Newton_for_HMF} with parameters outlined in the following table.
\begin{table}[H]
\centering
\begin{tabular}{l|c|c|c|c}
 & value of $q$ & regularization parameter & line search & value of $\theta$ \\
\hline
\Cref{alg:Newton_with_LM}a & $3.0$ & $\mu^k = \sqrt{(1/2)\|\nabla f(x^k)\|}$ & $\alpha^k = 1$ for all $k\geq 0$ & N/A \\
\Cref{alg:Newton_with_LM}b & $2.1$ & $\mu^k = 10^{-9}$ for all $k\geq 0$ & Armijo backtracking & N/A \\
\Cref{alg:Newton_for_HMF} & $2.1$ & $\mu^k = 10^{-9}$ for all $k\geq 0$ & Armijo backtracking & $0.8$ \\
\end{tabular}
\caption{The set of parameters used in the experiments for \Cref{alg:Newton_with_LM} and \Cref{alg:Newton_for_HMF}.}
\label{tab:parameters}
\end{table}
The results of the experiments are provided in \Cref{fig:4.3}. The leftmost plots show the results of \Cref{alg:Newton_with_LM}a for the adaptive choice of $\mu^k$. We observe slow sublinear convergence aligning with the theoretical worst-case sublinear rate derived in \Cref{thm:MF_global_convergence}. \Cref{alg:Newton_with_LM}b (the middle plots) and \Cref{alg:Newton_for_HMF} (the rightmost plots) perform better. Interestingly, \Cref{alg:Newton_with_LM}b exhibits slow initial convergence followed by fast convergence resembling the behaviour of the Newton method for well-conditioned problems. We see from the plots that for problems of size $(m,n) = (100, 150)$ and $(200, 300)$ the difference between \Cref{alg:Newton_with_LM}b and \Cref{alg:Newton_for_HMF} is not significant. Nonetheless, we can obtain faster convergence with \Cref{alg:Newton_for_HMF} for these problems if we set $\theta$ to a smaller value that will drive $\nu$ to zero faster. However, setting $\theta$ to a value that is too small may result in stalling of the progress similar to the initial slow convergence phase of \Cref{alg:Newton_with_LM}b. Therefore, there is a need for an adaptive definition of $\nu^k$ based on the progress of the iterates. 

In \Cref{tab:xres_all}, we provide the relative errors of $x^k$ with respect to the primal optimal solution $x^*$ of the last ten iterations of \Cref{alg:Newton_with_LM}b and \Cref{alg:Newton_for_HMF}. Implementations of \Cref{alg:Newton_with_LM} and \ref{alg:Newton_for_HMF} on a single random problem are available on GitHub\footnote{\url{https://github.com/alinaabdikarimova/lp-via-unconstrained-minimization}}.


\begin{figure}[t]
    \centering
    \includegraphics[width=\textwidth]{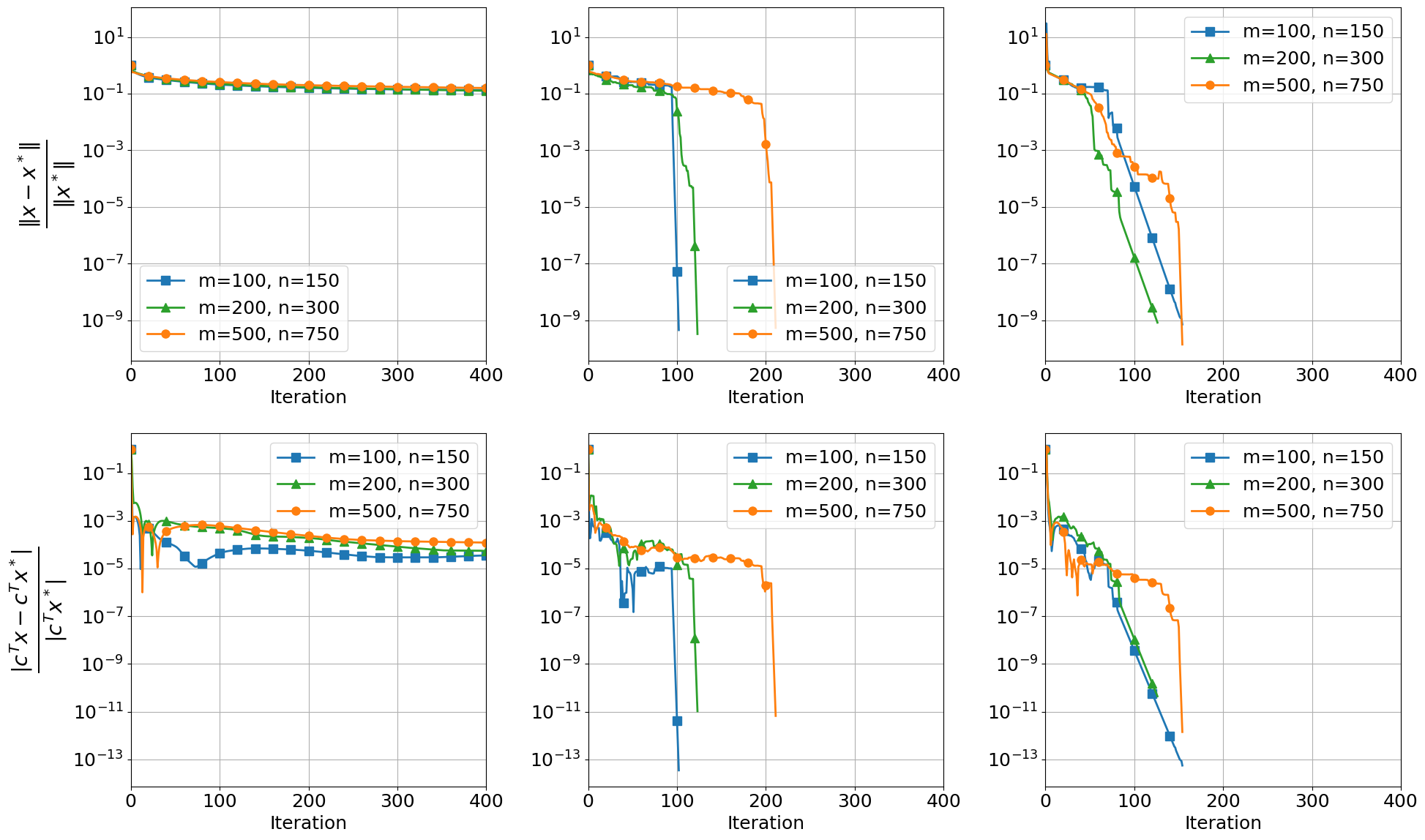} 
    \caption{This figure shows results for \Cref{alg:Newton_with_LM}a (leftmost plots), \Cref{alg:Newton_with_LM}b (middle plots) and \Cref{alg:Newton_for_HMF} (rightmost plots) when tested on three synthetically generated random linear programs of dimensions $(m,n) = (100,150)$, $(200,300)$, $(500,750)$. The plots in the first row show the relative error of $x^k$ with respect to the primal optimal solution $x^*$. The plots in the second row show the relative error of $c^Tx^k$ with respect to the primal objective function value at the primal optimal solution.}
    \label{fig:4.3} 
\end{figure}

\begin{table}[h!]
\centering
\footnotesize
\begin{minipage}{0.3\textwidth}
\centering
\begin{tabular}{|c|c|}
\hline
\Cref{alg:Newton_with_LM}b & \Cref{alg:Newton_for_HMF} \\
\hline
$1.8{\times}10^{-1}$ & $4.5{\times}10^{-9}$ \\
$1.8{\times}10^{-1}$ & $4.1{\times}10^{-9}$ \\
$8.2{\times}10^{-3}$ & $3.0{\times}10^{-9}$ \\
$7.4{\times}10^{-4}$ & $2.4{\times}10^{-9}$ \\
$6.8{\times}10^{-5}$ & $2.0{\times}10^{-9}$ \\
$6.2{\times}10^{-6}$ & $1.6{\times}10^{-9}$ \\
$5.7{\times}10^{-7}$ & $1.3{\times}10^{-9}$ \\
$5.3{\times}10^{-8}$ & $1.2{\times}10^{-9}$ \\
$4.9{\times}10^{-9}$ & $1.1{\times}10^{-9}$ \\
$4.5{\times}10^{-10}$ & $7.1{\times}10^{-10}$ \\
\hline
\end{tabular}
\caption*{$m = 100, n = 150$}
\end{minipage}
\hfill
\begin{minipage}{0.3\textwidth}
\centering
\begin{tabular}{|c|c|}
\hline
\Cref{alg:Newton_with_LM}b & \Cref{alg:Newton_for_HMF} \\
\hline
$5.6{\times}10^{-5}$ & $5.1{\times}10^{-9}$ \\
$5.5{\times}10^{-5}$ & $4.2{\times}10^{-9}$ \\
$5.5{\times}10^{-5}$ & $3.4{\times}10^{-9}$ \\
$4.9{\times}10^{-5}$ & $2.8{\times}10^{-9}$ \\
$4.9{\times}10^{-5}$ & $2.3{\times}10^{-9}$ \\
$4.5{\times}10^{-6}$ & $1.9{\times}10^{-9}$ \\
$4.1{\times}10^{-7}$ & $1.5{\times}10^{-9}$ \\
$3.8{\times}10^{-8}$ & $1.2{\times}10^{-9}$ \\
$3.5{\times}10^{-9}$ & $1.0{\times}10^{-9}$ \\
$3.3{\times}10^{-10}$ & $8.2{\times}10^{-10}$ \\
\hline
\end{tabular}
\caption*{$m = 200, n = 300$}
\end{minipage}
\hfill
\begin{minipage}{0.3\textwidth}
\centering
\begin{tabular}{|c|c|}
\hline
\Cref{alg:Newton_with_LM}b & \Cref{alg:Newton_for_HMF} \\
\hline
$4.1{\times}10^{-4}$ & $6.3{\times}10^{-6}$ \\
$1.5{\times}10^{-4}$ & $6.3{\times}10^{-6}$ \\
$7.4{\times}10^{-5}$ & $3.0{\times}10^{-6}$ \\
$7.4{\times}10^{-5}$ & $3.0{\times}10^{-6}$ \\
$7.4{\times}10^{-5}$ & $3.0{\times}10^{-6}$ \\
$6.9{\times}10^{-6}$ & $1.6{\times}10^{-6}$ \\
$6.4{\times}10^{-7}$ & $1.6{\times}10^{-7}$ \\
$6.0{\times}10^{-8}$ & $1.5{\times}10^{-8}$ \\
$5.6{\times}10^{-9}$ & $1.4{\times}10^{-9}$ \\
$5.4{\times}10^{-10}$ & $1.4{\times}10^{-10}$ \\
\hline
\end{tabular}
\caption*{$m = 500, n = 750$}
\end{minipage}

\caption{Relative errors of $x^k$ with respect to the primal optimal solution $x^*$ of the last ten iterations of \Cref{alg:Newton_with_LM}b and \Cref{alg:Newton_for_HMF}.}
\label{tab:xres_all}
\end{table}

\subsection{An unbounded linear program}\label{sec:unb_lin_prog}
We run \Cref{alg:Newton_with_LM}a, \Cref{alg:Newton_with_LM}b and \Cref{alg:Newton_for_HMF} on a randomly generated unbounded linear program with the same parameters provided in \Cref{tab:parameters}. Our synthetic unbounded problem has dimensions $(m,n) = (50,150)$. We first solved this problem with Python's HiGHS solver \cite{Huangfu2023} to verify unboundedness. The results of the experiment are provided in \Cref{fig:4.4}. 

From all three plots in \Cref{fig:4.4} we observe that $\| \nabla f_q^k \|$ approaches zero, so the algorithms converge to an optimal solution. Since the values of $f_q(x^k,\lambda^k,s^k)$ remain bounded away from zero, from \Cref{cor:no_opt_sol} it follows that the problem is either infeasible or unbounded. Since $\|Ax^k - b\|$ approaches zero, indicating feasibility of the primal problem, we conclude that the problem is unbounded.
\begin{figure}[t]
    \centering
    \includegraphics[width=\textwidth]{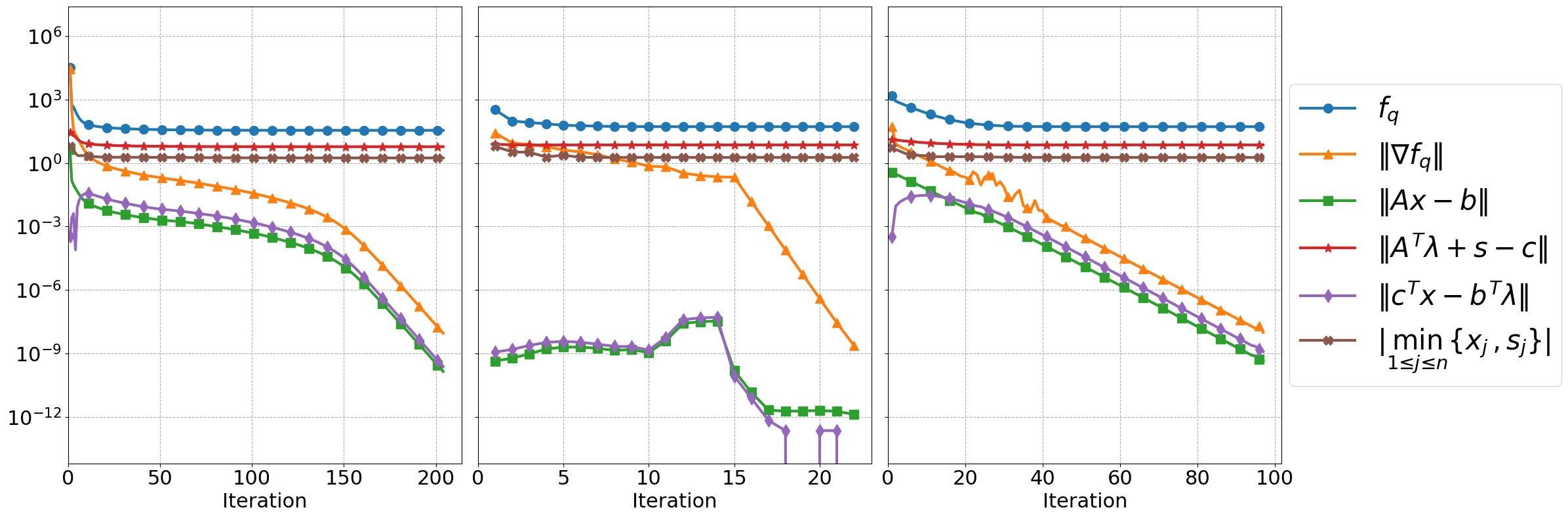} 
    \caption{This figure shows results for Algorithm 1a (leftmost plot), Algorithm 1b (middle plot) and Algorithm 2 (rightmost plot) when tested on a synthetically generated random unbounded linear program of dimension $(m,n) = (50,150)$. }
    \label{fig:4.4} 
\end{figure}


\section{Summary} \label{sec:conclusion}


In this paper, we have demonstrated that solving primal-dual linear program \eqref{eq:primal} and \eqref{eq:dual} is equivalent to finding an unconstrained minimizer of a convex and twice continuously differentiable merit function $f_q(x,\lambda,x)$ defined in \eqref{eq:merit_function}. The merit function $f_q(x,\lambda,x)$ is a sum of i) the squared duality gap $c^Tx-b^T\lambda$ to encourage optimality, ii) the squared equality constraint terms $Ax-b$ and $A^T\lambda + s -c$ to encourage primal and dual feasibility, and iii) the nonnegativity penalty terms $\max\{-x_j,0\}^q$ and $\max\{-s_j,0\}^q$ to encourage nonnegative solutions, where $q>2$ to ensure second-order differentiability. From the optimality conditions \eqref{eq:optimality_conditions} for linear programs, it follows that $(x^*,\lambda^*,x^*)$ is a solution to the primal-dual linear program if and only if $(x^*,\lambda^*,x^*)$ is a minimizer of the merit function $f_q(x,\lambda,x)$  and $f_q(x^*,\lambda^*,x^*) = 0$. In other words, when the primal-dual pair have an optimal solution, its unconstrained reformulation \eqref{eq:merit_function} can be viewed as a zero residual problem. Moreover, the merit function is defined over the entire $\mathbb{R}^{2n+m}$ allowing iterates to be infeasible with respect to \eqref{eq:primal} and \eqref{eq:dual}. 

We saw that minimizing $f_q(x,\lambda,x)$ is not trivial due to its Hessian being singular at the optimal solutions and at some other points in the domain (for example, for $(x,s) > 0$). To handle singular Hessians, we regularized Newton with Levenberg-Marquardt approach. We proved that this variant of the regularized Newton method with a particular choice of the regularization parameter $\mu^k$ (see \Cref{thm:MF_global_convergence}) converges globally to an optimal solution of the primal-dual pair at $O(\epsilon^{-3/2})$ rate requiring only the assumption that the optimal set of the primal-dual linear program is bounded. Note that this assumption includes the common scenario of the linear program having a unique solution.

Before conducting the numerical experiments, we illustrated the effects of singular Hessians on the contours of the merit function $f_q$ and introduced a slightly modified merit function $h_{q,\nu}$ defined in \eqref{eq:homotopy_merit_function} with nonsingular Hessians and an improved landscape to help Newton to converge faster. We propose a heuristic method based on the new merit function, in which the iterates are updated via Newton steps while $\nu$ is gradually reduced to zero, allowing $h_{q,\nu}$ to more closely approximate $f_q$ and ultimately leading to convergence to the optimal solution.

We conducted numerical experiments on synthetically generated random linear programs of varying dimensions. We minimized the merit function $f_q$ with two variants of the Newton method with Levenberg-Marquardt regularization corresponding to two different ways of defining regularization parameter $\mu^k$ (i.e., adaptive and constant $\mu^k$) and minimized the new merit function $h_{q,\nu}$. On problems with optimal solutions, we observed convergence to very high accuracy for both merit functions, except for the variant with adaptive $\mu^k$. Between the two successful variants, both merit functions exhibited similar performance on problems of size $(m,n) = (100, 150)$ and $(200, 300)$. However, for the higher-dimensional case $(500, 750)$, the new merit function $h_{q,\nu}$ achieved faster convergence.

\section{Future work} \label{sec:future_work}
Our work can be improved in multiple ways. 
\begin{itemize}

    \item[-] \textbf{Adaptive update of parameter $\nu$.}  
    \Cref{alg:Newton_for_HMF} minimizes the modified merit function in \eqref{eq:homotopy_merit_function}, which depends on a parameter $\nu > 0$ that is reduced at each iteration by a fixed factor $\theta$. Experiments indicate that for small-dimensional problems, choosing a smaller $\theta$ can accelerate convergence, whereas for large-dimensional problems, $\nu$ should be decreased more gradually to prevent stalling. In our experiments, $\theta$ was tuned via trial and error; for general problems, however, a more sophisticated adaptive strategy would be preferable.
    
    
    
    
    

    \item[-] \textbf{Improved convergence rate.} Our numerical experiments demonstrate that we can do much better in practice than the worst-case bound derived in \Cref{thm:MF_global_convergence}, especially at the final iterations when the iterates are close to the optimal set. Regularized Newton method is well-known to converge globally to the optimal set of a convex function \cite{Polyak2009} with global convergence rates derived, for example, in \cite{Mishchenko2023, Polyak2009, Ueda2010}. Local convergence rates have also been studied. It was proven that (see, e.g., \cite{Dan2002, Li2004}), under the so-called local error bound assumption, regularized Newton method inherits classical Newton's local quadratic convergence property. It is very possible that, with very mild assumptions, the merit function in \eqref{eq:merit_function} or its modification satisfies the local error bound.

    \item[-] \textbf{Cheap iterations.} Each iteration in \Cref{alg:Newton_with_LM} and \ref{alg:Newton_for_HMF} require a solution of a linear system to find the Newton's search direction. One may consider computing the search directions approximately to cheapen each iteration's cost (see \cite{Dan2002, Li2004, Li2009}). 
    Dan et al.~\cite{Dan2002} showed that an inexact regularized Newton method is globally convergent under mild assumptions and, under the additional assumption of the local error bound, enjoys local superlinear convergence to the optimal set. 
    
        
    \end{itemize}
Of course, there are many more avenues that we can explore to enhance and extend this work. One of the important possible extensions of this work concerns unconstrained formulations of more complex constrained optimization problems. The fact that complementary slackness in linear programs can be replaced by the duality gap, that is linear in the variables of the problem, allowed us to formulate a convex merit function. If we consider KKT conditions for more complex problems and use a similar approach to put them in one unconstrained formulation, what properties would the resulting merit function have? How easy would it be to find its solutions?

Consider, for example, a nonnegative quadratic programming problem:
\begin{equation}\label{eq:quadratic}
    \text{min} \;\; \frac{1}{2} x^T Q x + c^Tx \;\; \text{subject to} \;\;  x  \geq 0.
\end{equation}
Its KKT conditions are given by
\begin{equation}\label{eq:kkt_quad}
    \begin{aligned}
        Qx + c - \lambda & = 0_n \\
        x & \geq 0 \\
        \lambda & \geq 0 \\
        x_j\lambda_j & = 0 \text{ for $j=1,\dots,n$.}
    \end{aligned}
\end{equation}
We can keep the complementary slackness conditions $x_j\lambda_j = 0$ as they are or replace them with $x^T\lambda = 0$, since $x$ and $\lambda$ are nonnegative. With the latter choice, a possible merit function for \eqref{eq:kkt_quad} is 
\begin{equation}\label{eq:merit_for_kkt}
    \min_{x,\lambda} \, m_q(x,\lambda) := \|Qx+c-\lambda\|_2^2 + \sum_{j=1}^n(\max\{-x_j,0\}^q + \max\{-\lambda_j,0\}^q) + (x^T\lambda)^2,
\end{equation}
where $q>2$ and where $(x^T\lambda)^2$ is nonconvex. Note that $(x^*,\lambda^*)$ is a solution to \eqref{eq:kkt_quad} if and only if $(x^*,\lambda^*)$ is a solution to \eqref{eq:merit_for_kkt} and $m_q(x^*,\lambda^*) = 0$ (an analogue of \Cref{thm:equivalence} for linear programs). Since KKT conditions \eqref{eq:kkt_quad} provide sufficient and necessary conditions for optimality, constrained problem \eqref{eq:quadratic} and unconstrained problem \eqref{eq:merit_for_kkt} are equivalent. 
\appendix

\section{Simplified homogeneous model's merit function}\label{sec:mf_for_hlf}
Here, we provide the merit function and derive the gradient and Hessian for the simplified homogeneous model \eqref{eq:hlf}. Define 
\begin{equation}
    \begin{aligned}
        \gamma & :=  c^T x - b^Ty + \kappa  \\
        \rho & := b\tau - Ax \\
        \sigma & := c\tau - A^Ty - s 
    \end{aligned}
\end{equation}
The merit function for the simplified homogeneous model is given by
\begin{equation} \label{eq:merit_function_hlf}\tag{MF1}
\begin{aligned}
    \arg \min_{x,\lambda, s} f_q(x, y, s, \tau, \kappa) := & \frac{1}{2}(c^Tx-b^T\lambda + \kappa)^2 + \frac{1}{2}||Ax-b\tau||_2^2 + \frac{1}{2}|| A^T\lambda + s -c\tau ||_2^2 \\ 
    & +\frac{1}{q(q-1)} \Bigg( \max\{-\tau,0\}^q + \\
    & + \max\{-\kappa,0\}^q + \sum_{j=1}^n \max\{-x_j,0\}^q + \max\{-s_j,0\}^q \Bigg),
\end{aligned}
\end{equation}
The gradient and Hessian of $f_q(x, y, s, \tau, \kappa)$ are given by 
\begin{equation}
    \nabla f_q(x, y, s, \tau, \kappa) = \gamma \begin{pmatrix}
    c \\ -b \\ 0 \\ 0 \\ 1
\end{pmatrix} - \begin{pmatrix}
    A^T\rho \\ A\sigma \\ \sigma \\ w \\ 0
\end{pmatrix} - \frac{1}{q-1} \begin{pmatrix}
    (-x)_+^{q-1} \\ 0_m \\ (-s)_+^{q-1} \\ (-\tau)_+^{q-1} \\ (-\kappa)_+^{q-1}
\end{pmatrix},
\end{equation}
where $w = (Ax)^Tb + (A^Ty+s)^Tc - (\|b\|^2 + \|c\|^2)\tau$, and
\begin{equation}
    \nabla^2 f(x, y, s, \tau, \kappa) = H + \diag((-x)_+^{q-2}, 0_m , (-s)_+^{q-2},(-\tau)_+^{q-2},(-\kappa)_+^{q-2}),
\end{equation}
where  
\begin{equation}
    H = \begin{pmatrix}
    cc^T+A^TA & -cb^T & 0 & -A^Tb & c \\
    -bc^T & bb^T+AA^T & A & -Ac & -b \\
    0 & A^T & I & -c & 0 \\
    -b^TA & -c^TA^T & -c^T & \|b\|^2 + \|c\|^2 & 0 \\
    c^T & -b^T & 0 & 0 & 1
\end{pmatrix}.
\end{equation}

\newpage
\bibliography{main}
\bibliographystyle{plain}

\end{document}